\documentclass[12pt,a4paper,oneside]{amsart}

\usepackage{geometry}
\usepackage[sc]{mathpazo}
\usepackage[T1]{fontenc}
\usepackage{amsmath,amssymb,mathtools}
\DeclareMathOperator{\SL}{SL}
\DeclareMathOperator{\E}{E}
\DeclareMathOperator{\GL}{GL}
\DeclareMathOperator{\GE}{GE}
\usepackage{parskip}
\usepackage{etoolbox}
\usepackage{microtype}
\usepackage{xcolor}
\usepackage{placeins}
\usepackage{tikz}
\usetikzlibrary{patterns,patterns.meta}

\definecolor{WrightGreen}{RGB}{34,139,34}
\definecolor{WrightYellow}{RGB}{218,165,32}
\definecolor{linkviolet}{RGB}{112,60,160}
\definecolor{citeblue}{RGB}{35,82,145}

\usepackage{hyperref}
\hypersetup{
  colorlinks=true,
  linkcolor=linkviolet,
  citecolor=citeblue,
  urlcolor=citeblue,
  pdftitle={An explicit non-elementary matrix over a bivariate Laurent polynomial ring},
  pdfauthor={Vasiliy Ionin and Artem Semidetnov},
  pdfsubject={Elementary generation of special linear groups over Laurent polynomial rings}
}

\patchcmd{\section}{\scshape}{\large\bfseries}{}{}
\makeatletter
\renewcommand{\@secnumfont}{\bfseries}
\makeatother
\patchcmd{\thebibliography}{\section*}{\paragraph}{}{}
\numberwithin{equation}{section}
\makeatletter
\let\c@subsection\c@equation

\makeatother
\newtheorem{theorem}{Theorem}

\newtheorem{proposition}[equation]{Proposition}
\newtheorem{lemma}[equation]{Lemma}
\newtheorem*{classicaltheorem}{Theorem}

\theoremstyle{definition}
\newtheorem*{question}{Question}

\title[An explicit non-elementary matrix]{An explicit non-elementary matrix\\
over a bivariate Laurent polynomial ring}

\author{Vasiliy Ionin}
\address{Saint Petersburg Department of Steklov Mathematical Institute\newline
27 Fontanka, St.~Petersburg, 191023, Russia}
\email{ionin.code@gmail.com}

\author{Artem Semidetnov}
\address{Universit\'e de Gen\`eve, Section de Math\'ematiques\\
Route de Drize 7, Villa Battelle, 1227 Carouge, Switzerland}
\email{artemsemidetnov@gmail.com, Artem.Semidetnov@etu.unige.ch}

\subjclass[2020]{Primary 20H05; Secondary 20E06, 13F20}
\keywords{elementary matrices, Laurent polynomial rings, amalgamated products, discrete valuations}
\date{}

\begin{document}

\begin{abstract}
Let $F$ be the fraction field of a discrete valuation ring.
We give an explicit matrix in
$\SL_2(F[X^{\pm1},Y^{\pm1}])\setminus \E_2(F[X^{\pm1},Y^{\pm1}])$.
The proof of non-elementarity is based on techniques developed by Peter Abramenko
in his preprint \cite{Abramenko}.
\end{abstract}

\maketitle

\tableofcontents

\section{Introduction}

For a commutative ring $R$ and $N\geq 2$, let $\E_N(R)$ be the subgroup of
$\SL_N(R)$ generated by the elementary matrices
$e_{ij}(r):=I+re_{ij}$, where $r\in R$ and $i\neq j$.  Let $\GE_2(R)$ be
the subgroup of $\GL_2(R)$ generated by $\E_2(R)$ and the invertible
diagonal matrices.  The ring $R$ is called a $\GE_2$-ring if
$\GE_2(R)=\GL_2(R)$; for commutative $R$, this is equivalent to
$\SL_2(R)=\E_2(R)$.

Consider the ring
\[
 R=F[X_1^{\pm1},\ldots,X_n^{\pm1},Y_1,\ldots,Y_m],
\]
where $F$ is a field.  It is natural to ask when $\E_N(R)=\SL_N(R)$.
Figure~\ref{fig:landscape} summarizes the known results in rank~$N=2$ as $(n,m)$
varies.

\begin{figure}[ht]
\centering
\begin{tikzpicture}[x=0.72cm,y=0.72cm,every node/.style={font=\scriptsize}]
  \foreach \x in {0,...,5}{
    \foreach \y in {0,...,5}{
      \def\myflag{0}
      \ifnum\x=1 \ifnum\y=1
        \draw[pattern=horizontal lines,pattern color=WrightYellow]
          (\x,\y) rectangle ++(1,1);
        \def\myflag{1}
      \fi\fi
      \ifnum\myflag=0
        \ifnum\x=2 \ifnum\y=0
          \draw[violet,line width=0.55pt] (\x+0.5,\y+0.5) circle (0.3);
          \draw[violet,line width=0.55pt] (\x+0.1,\y+0.5)--(\x+0.9,\y+0.5);
          \draw[violet,line width=0.55pt] (\x+0.5,\y+0.1)--(\x+0.5,\y+0.9);
          \fill[violet] (\x+0.5,\y+0.5) circle (0.045);
          \def\myflag{1}
        \fi\fi
      \fi
      \ifnum\myflag=0
        \def\bmcase{0}
        \ifnum\x>2 \def\bmcase{1}\fi
        \ifnum\x=2 \ifnum\y>0 \def\bmcase{1}\fi\fi
        \ifnum\bmcase=1
          \draw[pattern=north east lines,pattern color=blue]
            (\x,\y) rectangle ++(1,1);
        \fi
        \ifnum\y>1
          \draw[pattern=north west lines,pattern color=red]
            (\x,\y) rectangle ++(1,1);
        \fi
      \fi
      \draw (\x,\y) rectangle ++(1,1);
    }
  }
  \foreach \x/\y in {0/0,1/0,0/1}{
    \draw[WrightGreen!80!black,line width=0.8pt,line cap=round,line join=round]
      (\x+0.22,\y+0.5)--(\x+0.42,\y+0.3)--(\x+0.78,\y+0.72);
  }
  \foreach \y in {0,...,5}{\node at (6.35,\y+0.5) {$\ldots$};}
  \foreach \x in {0,...,5}{\node at (\x+0.5,6.55) {$\vdots$};}
  \node at (6.35,6.55) {$\ddots$};
  \foreach \x in {0,...,5}{\node at (\x+0.5,-0.35) {$\x$};}
  \foreach \y in {0,...,5}{\node at (-0.35,\y+0.5) {$\y$};}
  \node at (3,-0.8) {$n$};
  \node at (-0.8,3) {$m$};
  \begin{scope}[shift={(7.5,0)}]
    \draw (0,5.35) rectangle ++(0.7,0.7);
    \draw[WrightGreen!80!black,line width=0.8pt,line cap=round,line join=round]
      (0.14,5.69)--(0.29,5.54)--(0.57,5.88);
    \node[anchor=west] at (0.9,5.7) {Yes (trivial)};
    \draw[pattern=north west lines,pattern color=red] (0,4.35) rectangle ++(0.7,0.7);
    \node[anchor=west] at (0.9,4.7)
      {No (\hyperlink{cite.Cohn}{Cohn, 1966})};
    \draw[pattern=horizontal lines,pattern color=WrightYellow]
      (0,3.35) rectangle ++(0.7,0.7);
    \node[anchor=west] at (0.9,3.7)
      {No (\hyperlink{cite.Wright}{Wright, 1978})};
    \draw[pattern=north east lines,pattern color=blue] (0,2.35) rectangle ++(0.7,0.7);
    \node[anchor=west] at (0.9,2.7)
      {No (\hyperlink{cite.BachmuthMochizuki}{Bachmuth--Mochizuki, 1982})};
    \draw (0,1.35) rectangle ++(0.7,0.7);
    \draw[violet,line width=0.55pt] (0.35,1.7) circle (0.22);
    \draw[violet,line width=0.55pt] (0.05,1.7)--(0.65,1.7);
    \draw[violet,line width=0.55pt] (0.35,1.4)--(0.35,2.0);
    \fill[violet] (0.35,1.7) circle (0.035);
    \node[anchor=west,align=left] at (0.9,1.7)
      {No when $F$ is the fraction field\\
       of a DVR (Theorem~\ref{thm:main})};
  \end{scope}
\end{tikzpicture}
\caption{Is $\E_2(R)=\SL_2(R)$ for
$R=F[X_1^{\pm1},\ldots,X_n^{\pm1},Y_1,\ldots,Y_m]$?}
\label{fig:landscape}
\end{figure}

The results underlying the entries of Figure~\ref{fig:landscape}, together
with their historical context, are discussed in
Section~\ref{sec:historical-overview}.  Before the present theorem, these
results left open the case $(n,m)=(2,0)$, which led to the following question.

\begin{question}[{\cite[Problem~2, Question~2]{BanffSession}}]
For which fields $F$, if any, is $F[X^{\pm1},Y^{\pm1}]$ a $\GE_2$-ring?
\end{question}

Our main result answers this question negatively when $F$ is the fraction
field of a discrete valuation ring and provides an explicit non-elementary
matrix.

\begin{theorem}\label{thm:main}
Let $A$ be a discrete valuation ring with fraction field $F$ and uniformizer~$\pi$.
Then
\[
 M_\pi=
 \begin{pmatrix}
 1-\tfrac{1}{\pi}(X-1)(Y-1) & \tfrac{1}{\pi}(Y-1)^2\\[6pt]
 -\tfrac{1}{\pi}(X-1)^2 & 1+\tfrac{1}{\pi}(X-1)(Y-1)
 \end{pmatrix}
 \in \SL_2(F[X^{\pm1},Y^{\pm1}])
\]
does not belong to $\E_2(F[X^{\pm1},Y^{\pm1}])$.
\end{theorem}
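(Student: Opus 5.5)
We argue by contradiction, using an action of $\SL_2$ of a valued field on its Bruhat--Tits tree, in the spirit of Abramenko. Write $R=F[X^{\pm1},Y^{\pm1}]$. We embed $R$ into a field $K$ with a discrete valuation $v$ such that $v$ restricted to $R$ is \emph{not} bounded below on $R$. The point is to arrange that $E_2(R)$ lies in a subgroup of $\SL_2(K)$ that visibly fails to contain $M_\pi$.

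\textbf{Choice of valuation.} Take the Gauss-type valuation extending the $\pi$-adic valuation of $A$. Concretely, substitute $X=1+s$ and $Y=1+t$. Then $M_\pi=I+\tfrac1\pi\, u\,w^{T}J$ with $u=(t,s)^T$, i.e.\ $M_\pi$ is a transvection with direction $(Y-1,X-1)^T$ scaled by $1/\pi$. Inside $A[X^{\pm1},Y^{\pm1}]$ the elements $X-1$ and $Y-1$ are non-units, and the whole configuration reduces mod $\pi$ to $k[X^{\pm1},Y^{\pm1}]$ with $k$ the residue field.

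\textbf{Key steps.} We then proceed in the following order.
\begin{enumerate}
\item Let $T$ be the Bruhat--Tits tree of $\SL_2(K_v)$, where $K_v$ is the completion of $F(X,Y)$ with respect to the $\pi$-adic Gauss valuation. The residue field of $K_v$ is $k(X,Y)$. The group $\SL_2(A[X^{\pm1},Y^{\pm1}])$ stabilizes the standard vertex $x_0$.
\item Show that every elementary matrix $e_{12}(r)$ or $e_{21}(r)$ with $r\in R$ factors through a structure compatible with the tree: $e_{12}(r)$ fixes pointwise the ray from $x_0$ toward the end $\infty$ up to distance $\max(0,-v(r))$, and similarly for $e_{21}(r)$ toward the end $0$. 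From this, following Abramenko, we build a combinatorial invariant: for a product $g=\prod e_{i_j}(r_j)$, either $g x_0=x_0$ with $g$ lying in a specific proper subgroup $\mathcal{E}$ of the stabilizer, or $d(x_0,gx_0)>0$.
\item Compute $M_\pi x_0$. Since $M_\pi$ has entries of valuation $-1$, it moves $x_0$ to a neighbor $x_1$. We show $M_\pi$ fixes a neighbor corresponding to the line spanned by the reduction of $(Y-1,X-1)$ in $k(X,Y)^2$. That line is not defined over $k$, and is not $0$ or $\infty$.
\item Show that elementary products can only reach neighbors of $x_0$ through the lines $k\cdot(1,0)$, $k\cdot(0,1)$ or lines rational over the image of $k[X^{\pm1},Y^{\pm1}]$ under a restricted "unimodular" constraint. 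The intended invariant is: the first edge of the path $x_0\to gx_0$ corresponds to a point of $\mathbb{P}^1(k(X,Y))$ of the form $(a:b)$ with $(a,b)$ a unimodular row over $k[X^{\pm1},Y^{\pm1}]$ completable by $E_2$. The row $(Y-1,X-1)$ is not unimodular, which gives the contradiction.
\end{enumerate}

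\textbf{Main obstacle.} The hardest part is step (2)/(4): establishing a normal form for words in elementary matrices that controls the first edge of the path. Cancellations between large-valuation factors can bring $gx_0$ back close to $x_0$ through an arbitrary direction. For this I would use Abramenko's technique: reduce a word of minimal length with reduced-form conditions on valuations, in analogy with Cohn's $\GE_2$ normal forms for discretely normed rings. Then show by induction along the word that the tree path is non-backtracking, and that its initial edge is determined by the first factor with negative valuation. Such a first factor yields an edge toward a $k$-rational or coordinate direction, possibly after conjugation by $\SL_2(k[X^{\pm1},Y^{\pm1}])\cap E_2$-type elements at $x_0$. Verifying that the residual local group at $x_0$ generated by the images does not move the coordinate directions to $(Y-1:X-1)$ then reduces to the known fact that $k[X^{\pm1},Y^{\pm1}]$-points of $\mathbb{P}^1$ reachable by $E_2$ come from unimodular rows. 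I expect this reduction, rather than the final computation, to be the delicate point.
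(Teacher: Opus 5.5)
You have the right picture, and it is the tree-theoretic form of the paper's argument. $M_\pi$ fixes the neighbour $y$ of $x_0$ attached to the line $(Y-1:X-1)\in\mathbb{P}^1(k(X,Y))$, and one must show that no element of $\E_2(R)$ leaves $x_0$ through $y$. But step~(4), which carries the entire proof, is only announced as an ``intended invariant''. The route you sketch for it is not available off the shelf: Cohn-style reduced words in factors $e_{ij}(r_j)$ with arbitrary $v(r_j)\ll 0$, plus a non-backtracking induction. Indeed $-v$ is not a degree function on $R$ (it equals $-1$ on the unit $\pi$), and you supply no substitute. The missing idea is that such words never need to be analysed. $\E_2(R)$ is generated by $\E_2(A[X^{\pm1},Y^{\pm1}])$, which fixes $x_0$, together with the single element $e_{12}(\pi^{-1})$, which fixes the adjacent vertex $x_1=[\mathcal{O}\oplus\pi\mathcal{O}]$. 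Indeed $e_{12}(\pi^{-1})e_{21}(-\pi)e_{12}(\pi^{-1})\cdot e_{12}(-1)e_{21}(1)e_{12}(-1)=\operatorname{diag}(\pi^{-1},\pi)$, and conjugating by powers of this matrix produces every $e_{ij}(\pi^{-2n}f)$ with $f\in A[X^{\pm1},Y^{\pm1}]$. This is Proposition~\ref{prop:elementary-bound}.

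With this, step~(4) needs no normal form. Let $\Gamma=\langle\SL_2(A[X^{\pm1},Y^{\pm1}]),\SL_2(A[X^{\pm1},Y^{\pm1}])^d\rangle\supseteq\E_2(R)$. Since $\Gamma$ is generated by the stabilizers of the endpoints of $e=[x_0,x_1]$, the orbit $\Gamma e$ is a subtree, so it contains the geodesic $[x_0,gx_0]$ for every $g\in\Gamma$. Since $\SL_2$ preserves vertex types, the edges of $\Gamma e$ at $x_0$ are exactly the $[x_0,\gamma x_1]$ with $\gamma$ in the stabilizer of $x_0$ in $\Gamma$, which is $\SL_2(A[X^{\pm1},Y^{\pm1}])$. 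Their lines $(\bar\gamma_{11}:\bar\gamma_{21})$ come from unimodular columns over $k[X^{\pm1},Y^{\pm1}]$; ``completable by $\E_2$'' is neither needed nor something you could easily prove. You still owe the check that no $k(X,Y)^\times$-multiple of $(Y-1,X-1)$ is unimodular. This holds because $Y-1$ and $X-1$ are coprime in the UFD $k[X^{\pm1},Y^{\pm1}]$, so every Laurent-polynomial multiple is $f\cdot(Y-1,X-1)$, which lies in the proper ideal $(X-1,Y-1)$. This is exactly Lemma~\ref{lem:amalgam} in tree language, and it uses only $e_{21}(q)\notin\SL_2(A[X^{\pm1},Y^{\pm1}])\cdot\SL_2(\mathcal{O})^d$, which is weaker than Lemma~\ref{lem:coset}.

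Two smaller errors remain. In step~(3), $M_\pi$ cannot move $x_0$ to a neighbour, because $\SL_2$ preserves the bipartition of the tree. It fixes $y$ and sends $x_0$ to another neighbour of $y$, so $d(x_0,M_\pi x_0)=2$ and the geodesic begins with $[x_0,y]$, which is what the contradiction actually uses. In step~(2), $e_{12}(r)$ fixes the part of the ray towards $(1:0)$ lying beyond distance $\max(0,-v(r))$, not the initial segment up to that distance.
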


The proof, given in Section~\ref{sec:proof}, uses Serre's classical amalgam
decomposition and two ingredients from Abramenko \cite{Abramenko}.  We include
a variant of \cite[Lemma~2.6]{Abramenko} and the required special case of
\cite[Lemma~3.2(ii)]{Abramenko}.

Equivalently, Theorem~\ref{thm:main} applies whenever $F$ admits a nontrivial
discrete valuation; examples include $\mathbb{Q}$, $\mathbb{Q}_p$,
$\mathbb{F}_p(t)$, and $\mathbb{F}_p((t))$.  No field for which
$F^\times/\operatorname{Tor}(F^\times)$ is divisible admits a nontrivial
discrete valuation; consequently, the theorem does not apply to any
algebraically closed field, to $\mathbb{R}$, or to any field algebraic over
$\mathbb{F}_p$.  To the best of our knowledge, the cases
$F=\mathbb{F}_p$, $\overline{\mathbb{F}}_p$, $\overline{\mathbb{Q}}$, and
$\mathbb{C}$ remain open.

For example, taking $A=\mathbb{Z}_{(p)}$ in Theorem~\ref{thm:main} gives, for
every prime $p$,
\[
 M_p\in
 \SL_2(\mathbb{Q}[X^{\pm1},Y^{\pm1}])\setminus
 \E_2(\mathbb{Q}[X^{\pm1},Y^{\pm1}]).
\]

\subsection{Novelty}\label{subsec:novelty}

While preparing this paper, we observed that Abramenko's general criterion
\cite[Theorem~1.3]{Abramenko} already implies
\[
 \SL_2(F[X^{\pm1},Y^{\pm1}])\neq \E_2(F[X^{\pm1},Y^{\pm1}])
\]
when $F$ is the fraction field of a discrete valuation ring $A$: apply the
criterion to $A[X^{\pm1},Y^{\pm1}]$ and $\pi$, noting that $\pi$ remains prime,
\[
 \bigcap_{n\geq0}\pi^nA[X^{\pm1},Y^{\pm1}]=0,
\]
and $(A/\pi A)[X^{\pm1},Y^{\pm1}]$ is not a B\'ezout domain because
$(X-1,Y-1)$ is not principal.

This consequence appears not to have been recorded in the literature:
Abramenko posed the question at the 2022 Banff problem session
\cite[Problem~2, Question~2]{BanffSession}, and Stavrova later stated that
equality is not known over an arbitrary field \cite[p.~1]{Stavrova}.  For
$X-1,Y-1$, the proof of \cite[Theorem~3.4]{Abramenko} naturally suggests the
candidate $M_\pi$, but does not prove that this particular matrix is
non-elementary.  Indeed, \cite[Lemma~2.5]{Abramenko}, which underlies
\cite[Theorem~1.3]{Abramenko}, is nonconstructive: it establishes
non-elementarity only for an unspecified matrix in the construction.  Our
contribution is a direct verification of the required coset condition for
$a=e_{21}((X-1)/(Y-1))$; see
Lemma~\ref{lem:coset}.  This makes \cite[Lemma~2.6]{Abramenko} applicable to
$M_\pi$ itself.

\subsection{Acknowledgements}

We warmly thank Anastasia Stavrova for bringing this problem to our attention
and for helpful discussions that significantly influenced this work.

\subsection{AI use statement}

During the initial exploratory stage of this project, we used Claude Opus~5
to search for an analogue of Park's criterion for
$\mathbb{F}_2[X^{\pm1},Y^{\pm1}]$ \cite{Park}.  This attempt was unsuccessful.
We then carried out a computer search, which produced our first candidate
matrix but did not determine computationally whether this matrix belonged to
$\E_2$.  We provided the candidate and the intermediate calculations to Codex
running GPT-5.6 Sol, which found a proof
using the Bruhat--Tits tree associated with a discrete valuation and drew our
attention to Peter Abramenko's work.  The authors subsequently
checked and developed the argument presented here and take responsibility for
all mathematical claims in the paper.

\section{Historical overview}\label{sec:historical-overview}

The first fundamental negative result is Cohn's criterion.

\begin{classicaltheorem}[Cohn, {\cite[Proposition~7.3]{Cohn}}]
Let $R$ be a $k$-ring\footnote{A $k$-ring is a ring equipped with a specified
homomorphism $k\to R$.  For commutative $R$, this notion coincides with that
of a $k$-algebra.} with a degree function, and suppose that $R$ is also a
$\GE_2$-ring.  Then, of any two elements of the same degree which form a
regular row, each is $R$-dependent on the other.
\end{classicaltheorem}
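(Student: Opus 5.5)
The plan is Cohn's original route: put an arbitrary element of $\GE_2(R)$ into a normal form and read off the degrees of its entries. Let $d$ be the degree function, so $d(xy)=d(x)+d(y)$, $d(x-y)\le\max(d(x),d(y))$ and $d(0)=-\infty$. I assume, as in Cohn's setting, that the elements of degree $\le 0$ are exactly those of $k$. Write
\[
 E(x)=\begin{pmatrix} x & 1\\ -1 & 0\end{pmatrix},
\]
so that $e_{12}(x)=E(-x)E(0)^{-1}$ and $e_{21}(x)=E(0)^{-1}E(x)$. Hence every element of $\GE_2(R)$ is a product of an invertible diagonal matrix $D$ and matrices $E(x_i)$.

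\textbf{Step 1: normal form.} I would show that every $P\in\GE_2(R)$ can be written as
\[
 P=D\,E(x_1)E(x_2)\cdots E(x_r), \qquad x_2,\ldots,x_{r-1}\notin k .
\]
This uses the relations
\[
 E(x)E(0)E(y)=-E(x+y),\qquad E(\alpha)=\begin{pmatrix}\alpha&0\\0&\alpha^{-1}\end{pmatrix}E(0)\,\text{-type identities for }\alpha\in k^\times,
\]
that is, $E(\alpha)E(\alpha^{-1})E(\alpha)=-\operatorname{diag}(\alpha,\alpha^{-1})$. I would also use the commutation rule $E(x)\operatorname{diag}(\alpha,\beta)=\operatorname{diag}(\beta,\alpha)\,E(\beta^{-1}x\alpha)$. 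With these, any interior factor $E(x_i)$ with $x_i\in k$ (zero or a unit) can be removed. Each removal shortens the word after the diagonal factor is pushed to the left, so induction on $r$ finishes the step.

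\textbf{Step 2: degree growth.} By induction on $r$, I would show that if $d(x_i)\ge 1$ for $1<i<r$, then the first row of $E(x_1)\cdots E(x_r)$ is $(p_r,p_{r-1})$. Here the continuants satisfy $p_j=p_{j-1}x_j-p_{j-2}$, and
\[
 d(p_j)=d(x_1)+\cdots+d(x_j).
\]
The key point is that the leading term $p_{j-1}x_j$ has strictly larger degree than $p_{j-2}$, because $d(x_j)\ge1$. The end factors $x_1$ and $x_r$ may have degree $\le0$ and need separate handling. If $x_1\in k$, then $d(p_1)\le 0$ and the recursion still works from $j=2$ on.

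\textbf{Step 3: conclusion.} Let $(a,b)$ be a regular row with $d(a)=d(b)$. Complete it to a matrix $P\in\GL_2(R)=\GE_2(R)$ and write $P$ in the normal form of Step 1. Since $D$ only rescales by units of $k$, Step 2 gives, up to such units, $a=p_r$ and $b=p_{r-1}$. Then $d(a)=d(b)+d(x_r)$ forces $d(x_r)=0$, so $x_r\in k$. Consequently
\[
 a-b\cdot(\text{unit}\cdot x_r)=\text{unit}\cdot p_{r-2},
\]
which has degree $<d(a)$. Thus $a$ is $R$-dependent on $b$. By symmetry, using the right-hand factorisation $P\,E(0)$ with the row reversed, $b$ is $R$-dependent on $a$.

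\textbf{Main obstacle.} I expect the hardest part to be the bookkeeping in Steps 1 and 2 at the two ends of the word. The factors $x_1$ and $x_r$ may be $0$ or scalars, and the degenerate case $r\le 2$ must be checked by hand. One also has to make sure that the diagonal matrices, when commuted through, only twist the $x_i$ by units of $k$, so that their degrees are unchanged. I would first nail down the exact form of the degree identity in Step 2, allowing $d(x_1)\le 0$. Everything else should follow from it mechanically.
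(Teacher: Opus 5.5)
The paper gives no proof of this statement; it is quoted from Cohn. Your plan is essentially Cohn's own argument: reduce to the normal form $D\,E(x_1)\cdots E(x_r)$ with interior $x_i\notin k$, which uses your assumption that the elements of degree $\le 0$ are exactly those of $k$, and then count degrees along the first row. Step~1 and the symmetry step via $P\,E(0)$ are fine. Step~2 as written is wrong in one case, though. If $x_1=0$, the first row of $E(0)E(x_2)$ is $(-1,0)$ whatever $x_2$ is, so $p_2=-1$. Hence the identity $d(p_j)=d(x_1)+\cdots+d(x_j)$ fails, and the recursion becomes degree-additive only from $j=3$ on. Likewise, the relation $d(a)=d(b)+d(x_r)$ used in Step~3 is false when $x_r=0$.

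The clean fix is to prove only the strict inequality $d(p_{r-1})>d(p_{r-2})$. It holds for $r=1$, for every $r\ge 3$ (including $x_1=0$), and for $r=2$ with $x_1\notin k$. Write $D=\operatorname{diag}(\delta,\varepsilon)$, so that $(a,b)=(\delta p_r,\delta p_{r-1})$, and split on $x_r$:
\begin{itemize}
\item If $d(x_r)\ge 1$, then $d(a)>d(b)$.
\item If $x_r=0$, then $a=-\delta p_{r-2}$, so $d(a)<d(b)$.
\item Otherwise $x_r\in k^\times$, and $a-bx_r=-\delta p_{r-2}$ has degree $<d(b)=d(a)$. No twisting of $x_r$ by units is needed, since $D$ sits on the left.
\end{itemize}
The leftover cases are $r=0$, and $r=2$ with $x_1\in k$. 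There the hypothesis $d(a)=d(b)$ is either impossible or forces $a,b\in k^\times$, where dependence is trivial. The witness $x_r$ can genuinely fail in these cases: for $x_1,x_2\in k^\times$ one gets $a-bx_2=-\delta$, of the same degree as $a$. So the $r\le 2$ check you anticipated is really needed.
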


Applied to $F[X,Y]$ with total degree, the row $(1+XY,X^2)$ is regular, its
entries have the same degree, and neither is $F[X,Y]$-dependent on the other.
Consequently,
\[
    \begin{pmatrix}
        1+XY & X^2\\
        -Y^2 & 1-XY
    \end{pmatrix}
    \in
    \SL_2(F[X,Y])\setminus \E_2(F[X,Y]),
\]
and the same argument gives
\[
    \SL_2(F[Y_1,\ldots,Y_m])
    \neq
    \E_2(F[Y_1,\ldots,Y_m])
    \qquad (m\geq2).
\]

In higher rank, Suslin proved a stability theorem for both polynomial and
Laurent polynomial rings; we state it in its original generality.

\begin{classicaltheorem}[Suslin, {\cite[Corollary~7.10]{Suslin}}]
Let $A$ be a regular ring such that
$\operatorname{SK}_1(A)=\SL(A)/\E(A)=0$,\footnote{For commutative $A$, this
group also identifies with
$\ker(\det\colon K_1(A)\to A^\times)$.} and set
\[
    B=
    A[X_1^{\pm1},\ldots,X_k^{\pm1},
      X_{k+1},\ldots,X_s].
\]
Then, for $N\geq \max\{3,\dim A+2\}$, one has
\[
    \SL_N(B)=\E_N(B).
\]
\end{classicaltheorem}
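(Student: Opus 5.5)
The plan is to split the statement into a \emph{stable} part and an \emph{unstable} part. First I will show that the stable group $\operatorname{SK}_1(B)=\SL(B)/\E(B)$ vanishes. Second, I will show that for $N\geq\max\{3,\dim A+2\}$ the natural map $\SL_N(B)/\E_N(B)\to \operatorname{SK}_1(B)$ is injective. Together these give $\SL_N(B)=\E_N(B)$. Both parts will be proved by induction on $s$, adjoining one variable at a time, either polynomial or Laurent. The base ring $A$ is replaced at each step by a ring of the same kind, so the induction keeps track of regularity and the bound $\dim A+2$.

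\textbf{Stable part.} Since $A$ is regular, I would invoke the fundamental theorem of $K$-theory (Bass--Heller--Swan, extended by Quillen to regular rings):
\[
K_1(A[X])=K_1(A),\qquad K_1(A[X^{\pm1}])=K_1(A)\oplus K_0(A),
\]
with the extra summand spanned by the classes $[X\cdot \mathrm{id}_P]$.
\begin{itemize}
\item In the polynomial case, regularity passes to $A[X]$ and $\operatorname{SK}_1$ is unchanged.
\item In the Laurent case, the extra summand has to be compared with the determinant. Elements $[X]\otimes([P]-[A^r])$ with $P$ of rank $r$ have determinant $1$, so the reduced part $\tilde K_0(A)$ lands in $\operatorname{SK}_1$.
\end{itemize}
I therefore expect to need the vanishing of this reduced part in Suslin's setting. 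I would try to extract it from the hypothesis on $A$, which in Suslin's framework is applied to the regular rings occurring in the induction. This bookkeeping is the point I am least sure of, and it is where I would go back to Suslin's precise hypotheses.

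\textbf{Unstable part.} The key tools are Suslin's normality theorem, that $\E_N(R)\trianglelefteq \SL_N(R)$ for $N\geq3$, and the local--global principle derived from it:
\begin{quote}
if $M(X)\in \SL_N(R[X])$ satisfies $M(0)\in\E_N(R)$, and $M_{\mathfrak m}\in \E_N(R_{\mathfrak m}[X])$ for every maximal ideal $\mathfrak m$, then $M\in \E_N(R[X])$.
\end{quote}
For the polynomial variables, this reduces matters to a local base. For a local ring $R$ of dimension $d$ one has stable rank at most $d+1$, and I would use Suslin's monic-polynomial (Horrocks-type) argument to prove injective stability for $K_1$ of $R[X]$ as soon as $N\geq d+2$. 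For the Laurent variables I would use the standard trick: multiplying by a suitable power of $X$ turns the entries of a column into polynomials containing a monic (or unit-leading) entry. One then reduces to a Laurent version of the local--global principle via the patching square $R[X]\to R[X^{\pm1}]\leftarrow R[X^{-1}]$.

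I expect the main obstacle to be this unstable step, and specifically keeping the bound at $\dim A+2$ rather than $\dim B+2$. Naive stable-rank estimates only give $\dim A+s+2$. The point is that Suslin's monic-column argument lets each new variable contribute nothing to the required size. I would first prove the one-variable case carefully: a unimodular row over $R[X]$ or $R[X^{\pm1}]$ containing a monic entry can be completed by elementary operations once $N\geq \dim R+2$. I would then iterate this case through the local--global principle.
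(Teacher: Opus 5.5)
Your split into a stable part, $\operatorname{SK}_1(B)=0$, and an unstable part, injectivity of $\SL_N(B)/\E_N(B)\to\operatorname{SK}_1(B)$ for $N\geq\max\{3,\dim A+2\}$, is the right architecture. The paper gives no proof to compare with: it only cites Suslin, and uses the result only for $A=F$ a field. However, your stable step has a gap that cannot be closed from the stated hypotheses. The point you call ``bookkeeping'' is exactly where it breaks.

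You correctly note that $\tilde K_0(A)$ maps into $\operatorname{SK}_1(A[X^{\pm1}])$. In fact $[P]\mapsto[X\cdot\mathrm{id}_P]$ makes $K_0(A)$ a split summand of $K_1(A[X^{\pm1}])$ for every commutative ring $A$. So for regular $A$ one gets $\operatorname{SK}_1(B)\cong\operatorname{SK}_1(A)\oplus\tilde K_0(A)^{k}$.

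Vanishing of $\tilde K_0(A)$ does not follow from $\operatorname{SK}_1(A)=0$. Take $A=\mathbb{Z}[\sqrt{-5}]$, which is regular of dimension $1$. By Bass--Milnor--Serre $\operatorname{SK}_1(A)=0$, yet $\tilde K_0(A)\cong\operatorname{Cl}(A)\cong\mathbb{Z}/2$.

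Explicitly, let $P=(2,1+\sqrt{-5})$. Then $P^2=(2)$, so $P\oplus P\cong A^2$. Transporting $X\cdot\mathrm{id}_P\oplus\mathrm{id}_P$ along this isomorphism gives $g\in\GL_2(A[X^{\pm1}])$ with $\det g=X$. The matrix $g\operatorname{diag}(X^{-1},1)\in\SL_2(A[X^{\pm1}])$ has class in $\operatorname{SK}_1$ equal to the image of $[P]-[A]\neq0$. Hence it does not lie in $\E_3(A[X^{\pm1}])$, even though $3=\max\{3,\dim A+2\}$.

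So for $k\geq1$ the statement as transcribed is false without an extra hypothesis such as $\tilde K_0(A)=0$, i.e.\ every finitely generated projective $A$-module is stably free. No argument can complete your stable step in the stated generality.

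Under that extra hypothesis your stable part does work, and it holds for a field, which is the case the paper needs. For regular $A$ one has $NK_0(A)=K_{-1}(A)=0$, so $K_0(A[X^{\pm1}])=K_0(A)$ and $\tilde K_0=0$ propagates along your induction.

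The unstable part, however, is still only a sketch. Suppose you iterate a one-variable result through the local--global principle over the base $A[X_1,\dots,X_{m-1}]$. Then you localize at maximal ideals of a ring of dimension up to $\dim A+m-1$, so the bound grows with the number of variables. Also, over a base that is not a field you cannot simply make an entry monic by a change of variables.

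Obtaining the bound $\dim A+2$ uniformly in the number of polynomial and Laurent variables is precisely the content of Suslin's $K_1$-stability theorem. Your proposal names this difficulty but does not supply the mechanism, so you would need to either prove that theorem in full or cite it.
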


For $A=F$, this gives $\SL_N(R)=\E_N(R)$ for every $N\geq3$, so only
rank~$2$ remains.

In 1978 Wright obtained a structural description of $\GL_2$ over polynomial
rings over a Euclidean domain.  Let $\E_2^+(R)$ denote the subgroup of
$\GL_2(R)$ generated by $\E_2(R)$ and the invertible diagonal matrices.

\begin{classicaltheorem}[Wright, {\cite[Theorem~3]{WrightAnnouncement}}]
Let $R=K[X_1,\ldots,X_s]$, where $K$ is a Euclidean domain.  Then
$\GL_2(R)$ is the free product of $\E_2^+(R)$ with a subgroup $W$,
amalgamated along
\[
    \E_2^+(R)\cap W=:B_2(R).
\]
Moreover, the inclusion $B_2(R)\subseteq W$ is strict unless $R$ is a
Euclidean domain.
\end{classicaltheorem}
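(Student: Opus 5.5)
The plan is a Bass--Serre style normal-form argument, built on the two-variable-type degree bookkeeping already implicit in Cohn's criterion above. We fix a degree function on $R=K[X_1,\ldots,X_s]$ that refines total degree by the Euclidean norm $\nu$ of $K$. That is, we order monomial data first by total degree, and in degree $0$ by $\nu$, so that $\deg(ab)\le \deg a+\deg b$ with equality on leading terms. Let $B_2(R)$ be the group of invertible upper triangular matrices.

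The first step is to identify $\E_2^+(R)$ with the analogue of Nagao's decomposition. We show that every element of $\E_2^+(R)$ can be written as an alternating product of elements of $B_2(R)$ and of $\GL_2(K)$, or more precisely of the ``constant'' part $\E_2^+(K)$ together with $w=\left(\begin{smallmatrix}0&1\\-1&0\end{smallmatrix}\right)$. In such a reduced product the degree of the bottom row grows strictly from letter to letter. This is the standard Euclidean/Nagao length argument: each $w e_{12}(r)$ with $r\notin K$ strictly raises degree, and the Euclidean algorithm in $K$ handles the constant letters.

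Next we define $W$ as the subgroup of $\GL_2(R)$ generated by $B_2(R)$ and all $g\in\GL_2(R)$ that are \emph{degree-irreducible}. These are the $g$ whose rows cannot be shortened by left multiplication with any element of $\E_2^+(R)\setminus B_2(R)$. An example is Cohn's matrix $\left(\begin{smallmatrix}1+XY&X^2\\-Y^2&1-XY\end{smallmatrix}\right)$, whose two rows have equal degree with leading forms that are not proportional over $R$.

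The core of the argument is then the amalgam criterion: $G=\E_2^+(R)*_{B_2(R)}W$ if and only if $G$ is generated by the two factors and no alternating word with all letters outside $B_2(R)$ is trivial. Generation follows by induction on the degree of the first column. Either a single elementary reduction lowers it, and we stay in $\E_2^+(R)$, or none does, in which case the matrix is degree-irreducible up to $B_2(R)$ and lies in $W$. For freeness we prove a length-increase lemma. Suppose $g_1\cdots g_k$ is reduced, so that consecutive letters come from different factors and none lies in $B_2(R)$. Then the pair of leading forms of the bottom row of the product is either non-proportional, or has strictly larger degree than that of $g_1\cdots g_{k-1}$. Hence the product is not in $B_2(R)$, and in particular it is not the identity.

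Finally we show the inclusion $B_2(R)\subsetneq W$ when $R$ is not Euclidean. Since $K$ is Euclidean, this means $s\ge1$ with $K$ not a field, or $s\ge2$. In either case we exhibit an explicit non-triangular degree-irreducible matrix in the spirit of Cohn's example; for $K$ not a field and $s=1$ we can use $1+pX$ and $p^2$ type entries with $p$ a non-unit. We check via the length lemma that this matrix lies outside $\E_2^+(R)$, so it is not in $B_2(R)=\E_2^+(R)\cap W$.

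The main obstacle is the length-increase lemma. Letters from $W$ are only ``irreducible'' with respect to single elementary moves, and one must rule out cancellation after several moves. I would first try to replace the set of generators of $W$ by a cleaner invariant: the condition that the leading forms of the two rows are non-proportional in the graded ring. I would then show this condition is preserved under right multiplication by $B_2(R)$ and by reduced $\E_2^+$-words. If this invariant fails to be stable for general Euclidean $K$, I would first prove the statement for $K$ a field, which is Nagao's theorem extended by Cohn's degree arguments. I would then treat general Euclidean $K$ by an additional induction on $\nu$ of the leading coefficients.
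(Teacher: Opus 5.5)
The paper does not prove this statement (it is quoted from Wright's announcement), so your plan has to stand on its own, and it breaks down at the definition of $W$. However ``degree-irreducible'' is made precise, it is a condition on the rows of $g$ and of its left translates $eg$ with $e\in\E_2^+(R)$. Since $e(gw)=(eg)w$, and right multiplication by $w=\left(\begin{smallmatrix}0&1\\-1&0\end{smallmatrix}\right)$ only swaps the two columns up to sign, $g$ is degree-irreducible if and only if $gw$ is. So once Cohn's matrix $g$ is among your generators, $w=g^{-1}(gw)\in W$. Then $\E_2^+(R)\subseteq\langle w,B_2(R)\rangle\subseteq W$, because $e_{21}(f)=w\,e_{12}(-f)\,w^{-1}$. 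Thus $\E_2^+(R)\cap W=\E_2^+(R)$, not the triangular group.

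Your length-increase lemma is therefore not merely unproved but false: $g\cdot w\cdot(gw)^{-1}=1$ is an alternating word with letters in $W\setminus B_2(R)$, $\E_2^+(R)\setminus B_2(R)$, $W\setminus B_2(R)$. Your fallback invariant (non-proportional leading forms) is equally stable under $g\mapsto gw$. Treating $K$ a field first does not help either, because this counterexample already lives over a field. With your $W$, which by your generation step is all of $\GL_2(R)$, only the degenerate splitting $\GL_2(R)=\E_2^+(R)*_{\E_2^+(R)}\GL_2(R)$ survives. Its entire content is $\GL_2(R)\neq\E_2^+(R)$ for non-Euclidean $R$, and your strictness step deduces this from the same false lemma. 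No local irreducibility condition on individual matrices can cut out a complement of $\E_2^+(R)$; $W$ has to be defined globally.

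Two further remarks, and a route that works. Refining total degree by a Euclidean function does not produce a degree function (for $K=\mathbb{Z}$ and $\nu=|\cdot|$ one has $\nu(1+1)>\max\{\nu(1),\nu(1)\}$), so the leading-term calculus fails on constant letters. It is also unnecessary. The equality $\E_2^+(R)=\langle\GL_2(K),B_2(R)\rangle$ and the fact that this group is the amalgam $\GL_2(K)*_{B_2(K)}B_2(R)$ follow from the total-degree Nagao argument over $F=\operatorname{Frac}K$, since $\GL_2(K)\cap B_2(F)=B_2(K)=B_2(R)\cap\GL_2(F)$ keeps reduced words reduced.

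For $W$, use Bass--Serre theory. Let $\GL_2(R)$ act on the Bruhat--Tits tree of the total-degree valuation on $F(X_1,\dots,X_s)$. The stabilizers of the standard ray are $\GL_2(K)$ and the groups of triangular matrices with upper-right entry of degree at most $n$; together they generate $\E_2^+(R)$. Every edge of the quotient graph that leaves the image of the ray has edge group inside $B_2(R)$. At the base vertex this uses that $K$ is a PID, so all $F$-rational directions are $\GL_2(K)$-equivalent, while the non-rational ones have scalar stabilizers. Collapsing the ray to a vertex with group $\E_2^+(R)$ and inserting an edge with group $B_2(R)$ gives $\GL_2(R)=\E_2^+(R)*_{B_2(R)}W$, with freeness and $\E_2^+(R)\cap W=B_2(R)$ automatic.

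Strictness is then equivalent to $\GL_2(R)\neq\E_2^+(R)$, which needs one witness. When $s\ge2$, Cohn's matrix works. When $s=1$, take $\left(\begin{smallmatrix}1+pX&p^2\\-X^2&1-pX\end{smallmatrix}\right)=e_{21}(-X/p)\,e_{12}(p^2)\,e_{21}(X/p)$ for a nonzero non-unit $p\in K$; a normal-form comparison in $\GL_2(F)*_{B_2(F)}B_2(F[X])$ excludes it from $\E_2^+(K[X])$.
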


If $D$ is a Euclidean domain which is not a field, then $D[X]$ is not
Euclidean, and Wright's theorem gives
\[
    \SL_2(D[X])\neq\E_2(D[X]).
\]
Taking $D=F[X^{\pm1}]$ yields
\[
    \SL_2(F[X^{\pm1},Y])
    \neq
    \E_2(F[X^{\pm1},Y]).
\]

In 1982 Bachmuth and Mochizuki proved a stronger negative result for Laurent
polynomial rings.

\begin{classicaltheorem}[Bachmuth--Mochizuki,
  {\cite[Theorem~1]{BachmuthMochizuki}}]
Let $D$ be an integral domain which is not a field, and put
$R=D[X^{\pm1},Y^{\pm1}]$.  Then any set of generators of $\SL_2(R)$ contains
infinitely many elements outside $\E_2(R)$.
\end{classicaltheorem}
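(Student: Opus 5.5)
I would argue by contradiction. Suppose $\SL_2(R)$ is generated by $\E_2(R)$ together with a \emph{finite} set $S=\{g_1,\dots,g_r\}$. The goal is to produce a matrix in $\SL_2(R)$ lying outside $H:=\langle \E_2(R),S\rangle$. Since $D$ is not a field, I fix a nonzero nonunit $d\in D$. I then choose a valuation ring $\mathcal O\subseteq K=\operatorname{Frac}(D)$ containing $D$ with $d\in\mathfrak m_{\mathcal O}$; it exists by Chevalley's extension theorem applied to a maximal ideal containing $d$. Passing from $D$ to $\mathcal O$ makes the ring-theoretic control available. The natural test family, modelled on Cohn's example and on $M_\pi$, is
\[
 M_n=\begin{pmatrix} 1+d\,a_nb_n & d\,b_n^2\\ -d\,a_n^2 & 1-d\,a_nb_n\end{pmatrix},\qquad a_n=X^n-1,\ b_n=Y^n-1 .
\]
These matrices lie in $\SL_2(R)$, and their first rows reduce to $(1,0)$ modulo $d$.

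The first step is to build an action that sees $\E_2(R)$ as ``small''. I would consider the field $L=K(X,Y)$ and a rank-two (lexicographic) valuation combining $v_{\mathcal O}$ with an $(X-1,Y-1)$-adic one, or more simply the tree of $\SL_2$ over $L$ for the Gauss-type valuation. I then use Serre's amalgam decomposition of $\SL_2$ over the corresponding ring to obtain an action on a tree $T$ in which $\E_2(R)$ is contained in a vertex stabiliser together with an edge-stabiliser amalgam. Following Abramenko's coset technique (the variant of \cite[Lemma~2.6]{Abramenko} mentioned in the introduction), membership of a matrix in $\E_2(R)$ forces a specific coset condition modulo the edge group. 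The key input is that $(X-1,Y-1)$ is not principal in $(D/dD)[X^{\pm1},Y^{\pm1}]$, or rather in the residue ring of $\mathcal O$; this is what prevents $M_n$ from satisfying that condition.

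The second step handles the finite set $S$. Each $g_i$ has entries of bounded $X,Y$-degree, say at most $N$. Using the ring endomorphisms $X\mapsto X^k$, $Y\mapsto Y^k$, which preserve $\E_2(R)$, I would show that $H$ acting on $T$ only moves the base configuration within a region controlled by $N$. More precisely, I want a $H$-invariant property, such as a bound on the ``exponent'' of the non-principal obstruction read off from the reductions of the first rows modulo $d$, which holds for $\E_2(R)$ and for each $g_i$ but fails for $M_n$ once $n>N$. Concretely, I would attach to each $g\in\SL_2(R)$ the ideal of $\bar R=(\mathcal O/\mathfrak m)[X^{\pm1},Y^{\pm1}]$ generated by the entries of $(g-I)/d$ when that quotient makes sense. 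I would then verify that along products in $H$ this ideal stays inside an ideal $J_N$ that is generated up to principal factors by polynomials of degree $\le N$, whereas for $M_n$ one gets $(X^n-1,Y^n-1)^2$-type ideals, which escape $J_N$.

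The main obstacle is this second step. Making the invariant genuinely multiplicative and compatible with arbitrary elementary factors is the heart of the Bachmuth--Mochizuki argument, and a naive ``ideal of $(g-I)/d$'' is not closed under multiplication by elementary matrices whose entries are not divisible by $d$. I would first try to make it work by localising the tree argument: conjugate so that the finitely many $g_i$ fix a common finite subtree, and show that $M_n$ for large $n$ translates the base vertex outside the $H$-orbit predicted by the amalgam normal forms. If that fails, the fallback is to quotient by the normal closure of $\E_2(R)$ and exhibit the images of $M_n$ as infinitely many independent elements of an abelian quotient such as a $K_1$-type or $\operatorname{SK}_1$-type group over $\bar R$.
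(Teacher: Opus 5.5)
The paper only quotes this theorem and gives no proof of it, so your proposal has to stand on its own. It does not: there is a genuine gap, and it starts in Step~1. Your reduction to ruling out $\SL_2(R)=\langle\E_2(R),S\rangle$ with $S$ finite is fine. The problem is that the tree you choose cannot see $\SL_2(R)$ at all.

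Since $D\subseteq\mathcal{O}$ and $d$ is \emph{not} inverted in $R=D[X^{\pm1},Y^{\pm1}]$, every element of $R$ has nonnegative Gauss valuation. Hence $\SL_2(R)\subseteq G_0:=\SL_2(\mathcal{O}_{\mathrm{Gauss}})$, and the whole group fixes the base vertex. With $H=\SL_2(R)$ you get $\langle H\cap G_0,H\cap G_1\rangle=H$. So Lemma~\ref{lem:amalgam} can only certify elements lying \emph{outside} $\SL_2(R)$, and your $M_n$ are just elements of a vertex stabilizer. In the paper the mechanism works only because $\pi^{-1}\in F[X^{\pm1},Y^{\pm1}]$. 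That is what puts $e_{12}(\pi^{-1})$ into $\E_2(R)$ (Subsection~\ref{subsec:elementary-bound-proof}) and gives $b$ an entry of valuation $-1$.

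Your variants have the same defect. A valuation that is $\ge0$ on $D$ and $>0$ on $X-1,Y-1$ has $v(X)=v(Y)=0$, so its valuation ring again contains $R$. The lexicographic one also has rank two, so Serre's theorem does not apply. Moreover, Chevalley's theorem need not give a discrete valuation. If $D$ is a rank-one valuation ring with value group $\mathbb{Q}$, the only valuation ring of $K$ containing $D$ with $d$ in its maximal ideal is $D$ itself.

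A usable tree must come from a valuation that is negative somewhere on $R$, for example the $Y$-adic valuation of $\operatorname{Frac}(D)(X)(Y)$. With $B=D[X^{\pm1}][Y]$ one has $R=B[1/Y]$ and $R\cap\mathcal{O}_Y=B$. The computation of Subsection~\ref{subsec:elementary-bound-proof}, with $Y$ in place of $\pi$, then gives $\E_2(R)\subseteq\langle\SL_2(B),\SL_2(B)^{\operatorname{diag}(Y,1)}\rangle$. The non-B\'ezout input becomes the non-principality of $(d,X-1)$ in $B/YB=D[X^{\pm1}]$, not that $(X-1,Y-1)$ is non-principal modulo $d$. This is the situation in which Abramenko's criterion, as used in Subsection~\ref{subsec:novelty}, yields $\SL_2(R)\neq\E_2(R)$. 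However, your $M_n$ have all entries in $B$, so they fix the base vertex of this tree as well. The test family would have to be rebuilt with $Y^{-1}$, not $d$, in the role of $\pi^{-1}$.

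Even with a correct tree, this only reaches $\SL_2(R)\neq\E_2(R)$. The actual content of the theorem is Step~2, and it is not supplied.
\begin{itemize}
\item As you note yourself, the ideal generated by the entries of $(g-I)/d$ is not multiplicative.
\item ``Conjugate so that the $g_i$ fix a common finite subtree'' is vacuous in your tree. In a tree that does see $\SL_2(R)$ it is unavailable: $\operatorname{diag}(Y,Y^{-1})\in\E_2(R)$ acts hyperbolically on the $Y$-adic tree, and conjugation does not change this.
\item The fallback is ruled out outright. $M_n$ is the image of Cohn's matrix $\begin{pmatrix}1+zxy & zy^2\\ -zx^2 & 1-zxy\end{pmatrix}$ under $x\mapsto a_n$, $y\mapsto b_n$, $z\mapsto d$. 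Suslin's theorem in Section~\ref{sec:historical-overview}, with $A=\mathbb{Z}$, gives $\SL_3(\mathbb{Z}[x,y,z])=\E_3(\mathbb{Z}[x,y,z])$. So $M_n$ is stably elementary and dies in every $K_1$- or $\operatorname{SK}_1$-type group. Its reduction over $\bar R$ is even the identity, since $M_n\equiv I\pmod{d}$.
\end{itemize}
Any invariant separating the $M_n$ must therefore be genuinely unstable, that is, rank-two. Along a Bass--Serre route, for instance, suppose $\SL_2(R)=\langle\SL_2(B),\SL_2(B)^{\operatorname{diag}(Y,1)},S\rangle$ with $S$ finite. Then $\SL_2(R)\cdot T_0$ is connected for a suitable finite subtree $T_0$, hence is an invariant subtree with finite quotient. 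To conclude, one would have to show that no $\SL_2(R)$-invariant subtree has finite quotient. Nothing in your proposal addresses a statement of this kind.
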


Thus $\SL_2(R)\neq\E_2(R)$.  Applied with
\[
    D=F[X_3^{\pm1},\ldots,X_n^{\pm1},Y_1,\ldots,Y_m],
\]
the theorem settles all cases with $n\geq2$ except $(n,m)=(2,0)$, since
$D$ is not a field whenever $n\geq3$ or $m\geq1$.  Together with Cohn's
and Wright's results, this leaves only the bivariate Laurent polynomial ring
$F[X^{\pm1},Y^{\pm1}]$; Bachmuth and Mochizuki explicitly singled out this
case as open.

Two years later Chu gave a general criterion for the $\GE_2$ problem for
graded rings.

\begin{classicaltheorem}[Chu, {\cite[Theorem~1.2]{Chu}}]
Let $R=\bigoplus_{\alpha\in M}R_\alpha$ be a graded domain of type $M$, where
$M$ is a nonzero totally ordered cancellative commutative monoid.  Assume that
$R_\alpha\neq0$ for every $\alpha\in M$.  If there exist two homogeneous
elements $x,y\in R$ such that
\[
    Rx+Ry
\]
is not a principal ideal, then $R$ is not a $\GE_2$-ring.
\end{classicaltheorem}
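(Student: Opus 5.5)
The plan is to reduce Chu's theorem to Cohn's criterion (the first classical theorem above) by giving $R$ a degree function coming from the grading. For $0\neq r=\sum_\alpha r_\alpha$, put $\deg r=\max\{\alpha: r_\alpha\neq0\}$ in the total order of $M$, and call $r_{\deg r}$ the leading form of $r$. Since $R$ is a domain and $M$ is cancellative and totally ordered, the product of the leading forms of $a$ and $b$ is nonzero and is the leading form of $ab$. Hence $\deg(ab)=\deg a+\deg b$ and $\deg(a+b)\le\max(\deg a,\deg b)$. So $\deg$ is a degree function in Cohn's sense, with $R_0$ playing the role of the ground ring, and $R$ is an $R_0$-ring. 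The hypothesis $R_\alpha\neq0$ for every $\alpha$ will be used to adjust degrees with homogeneous multipliers.

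Next I will build Cohn's regular row from $x,y$. Set $a=\deg x$ and $b=\deg y$, and pick nonzero homogeneous $s\in R_b$ and $t\in R_a$, so that $x'=xs$ and $y'=yt$ are homogeneous of the same degree $c=a+b$. The matrix
\[
 \begin{pmatrix} 1+x'y' & x'^2\\ -y'^2 & 1-x'y'\end{pmatrix}
\]
has determinant $1$, and its first row $(1+x'y',x'^2)$ is regular, with both entries of degree $2c$ (here $c>0$ unless $x,y$ are units, in which case $Rx+Ry=R$ is principal). If $R$ were a $\GE_2$-ring, Cohn's criterion would make one entry $R$-dependent on the other. Comparing leading forms, this gives a homogeneous $q$ with $x'y'=q\,x'^2$ (so $y'\in Rx'$) or with $x'^2=q\,x'y'$ (so $x'\in Ry'$). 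In either case $x'$ and $y'$ generate a principal ideal. This only concerns the ideal generated by $xs$ and $yt$, however, not $Rx+Ry$ itself.

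The main obstacle is therefore passing from ``$Rxs+Ryt$ is principal for the chosen multipliers'' to a contradiction with $Rx+Ry$ being non-principal. The key freedom is that $s$ and $t$ may be chosen as any nonzero homogeneous elements of the right degrees. First I would try to choose them so that a homogeneous divisibility $yt=q\,xs$ forces $x\mid y$. Taking $s$ coprime to $x$ in a suitable sense, or replacing Cohn's matrix by a variant such as
\[
 \begin{pmatrix} 1+xy\,st & x^2s^2\\ -y^2t^2 & 1-xy\,st\end{pmatrix}
\]
and varying $s,t$ over several degrees, the common divisibility relations should pin down a homogeneous generator of $Rx+Ry$. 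If that fails, I would instead run Cohn's argument directly: expand a putative elementary factorization of the matrix into the form $\prod E(q_i)$ with $E(q)=\begin{pmatrix} q&1\\-1&0\end{pmatrix}$, and use the degree function to show that leading forms propagate through the factorization. This forces the homogeneous pair $(x,y)$ to be reducible by a homogeneous Euclidean step ($y\mapsto y-qx$ with $q$ homogeneous). Repeating such steps strictly lowers degrees, which should contradict non-principality of $Rx+Ry$, since a terminating homogeneous Euclidean algorithm would exhibit a generator of $Rx+Ry$.
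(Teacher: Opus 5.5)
The paper gives no proof of this statement; it is quoted from Chu. So your argument has to stand on its own, and it fails at the first step: the $\deg$ induced by the grading is in general not a degree function to which Cohn's criterion applies. Cohn's degree functions are $\mathbb{N}$-valued. His criterion comes from his normal form for $\GE_2$ over discretely normed rings, which requires the elements of degree $0$ to be units.

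Under Chu's hypotheses, $M$ may be $\mathbb{Z}$, $R$ may contain homogeneous units of nonzero degree, and $R_0$ need not be a field. The example $D[T^{\pm1}]$ that the paper derives from this theorem has all three features, since $T$ is a unit of degree $1$. This is exactly why Cohn's method does not reach Laurent rings. Accordingly, your claim that $c>0$ unless $x,y$ are units is false. For $c\le 0$ the leading form of $1+x'y'$ is not $x'y'$, and for $c<0$ the two entries do not even have the same degree.

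A sanity check makes the failure concrete. Your argument never needs $R_\alpha\neq0$, since the multipliers can be taken from $R_b\ni y$ and $R_a\ni x$. Yet the theorem is false without this hypothesis. Let $D=k[X,Y]_{(X,Y)}$, graded by $\mathbb{N}$ and concentrated in degree $0$. It is a domain, $DX+DY$ is not principal, and $D$ is local, hence a $\GE_2$-ring. Nevertheless, with $x=X$, $y=Y$, $s=t=1$, your Cohn step would conclude that $DX+DY$ is principal.

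Beyond this, the obstacle you flag yourself is a genuine gap, and the suggested repairs are not arguments:
\begin{enumerate}
\item Principality of $Rxs+Ryt$ says nothing about $Rx+Ry$: take $s=y$, $t=x$.
\item There is no usable notion of a ``coprime'' multiplier in an arbitrary graded domain.
\item Your degree-lowering descent need not terminate when $M$ is not well ordered, e.g.\ $M=\mathbb{Z}$.
\item Leading forms do not propagate through factorizations into matrices $E(q)=\begin{pmatrix}q&1\\-1&0\end{pmatrix}$ once there are homogeneous units $u$ of nonzero degree, because $E(u)E(u^{-1})E(u)=-\operatorname{diag}(u,u^{-1})$.
\end{enumerate}

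What is missing has two parts. First, an argument that uses homogeneous elements of nonzero degree in an essential way; this is where $R_\alpha\neq0$ must enter. A natural test matrix is $\begin{pmatrix}1+xyz&x^2z\\-y^2z&1-xyz\end{pmatrix}$ with $z$ homogeneous of nonzero degree. This is exactly the shape of $M_\pi$, with $x=Y-1$, $y=1-X$, $z=\pi^{-1}$. Second, a way of controlling $\E_2(R)$ that survives non-units of degree $0$ and units of nonzero degree. Cohn's criterion supplies neither, and this is the real content of Chu's theorem.
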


In particular, $D[T,T^{-1}]$ is not a $\GE_2$-ring when $D$ is an integral
domain which is not B\'ezout \cite[Example~1.4]{Chu}.  This does not settle
$F[X^{\pm1},Y^{\pm1}]$, since $F[X^{\pm1}]$ is a principal ideal domain.
Park later developed an algorithm deciding membership in
$\E_2(R[x_1,\ldots,x_m])$, for $R$ Euclidean, and producing an elementary
factorization when one exists; its leading-term criterion rules out Cohn's
matrix \cite{Park}.

\section{The proof of Theorem~\ref{thm:main}}\label{sec:proof}

For the proof, fix $A$, $F$, and $\pi$ as in Theorem~\ref{thm:main}, and put
\[
 R=F[X^{\pm1},Y^{\pm1}],
 \qquad d=\operatorname{diag}(\pi,1).
\]
For every subgroup $G\leq \SL_2(F(X,Y))$, write $G^d=d^{-1}Gd$.
The matrix $M_\pi$ admits the factorization
\begin{equation*}\tag{$*$}\label{eq:factorization}
 M_\pi=
 e_{21}\!\left(\frac{X-1}{Y-1}\right)
 \cdot e_{12}\!\left(\tfrac{1}{\pi}(Y-1)^2\right)
 \cdot e_{21}\!\left(\frac{X-1}{Y-1}\right)^{-1}
\end{equation*}
in $\SL_2(F(X,Y))$.
The following two propositions are the main ingredients of the proof.

\begin{proposition}[{\cite[Lemma~3.2(ii)]{Abramenko} applied to
$A[X^{\pm1},Y^{\pm1}]$ and $\pi$}]\label{prop:elementary-bound}
One has
\[
 \E_2(R)\subseteq
 \left\langle
 \SL_2(A[X^{\pm1},Y^{\pm1}]),
 \SL_2(A[X^{\pm1},Y^{\pm1}])^d
 \right\rangle.
\]
\end{proposition}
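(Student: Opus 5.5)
Write $B=A[X^{\pm1},Y^{\pm1}]$, $G_0=\SL_2(B)$, $G_1=G_0^d$, and $H=\langle G_0,G_1\rangle$. Since $\E_2(R)$ is generated by the matrices $e_{12}(r)$ and $e_{21}(r)$ with $r\in R$, it suffices to show that each of these lies in $H$. Every $r\in R$ can be written as $r=\pi^{-k}b$ with $b\in B$ and $k\geq0$. The plan is to induct on $k$, using conjugation by elements that move between $G_0$ and $G_1$ so as to peel off one power of $\pi$ at a time.

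First I record how conjugation by $d$ acts on elementary matrices: $d^{-1}e_{12}(b)d=e_{12}(b/\pi)$ and $d^{-1}e_{21}(b)d=e_{21}(\pi b)$. Hence $G_1$ contains $e_{12}(\pi^{-1}b)$ for all $b\in B$. For $k=1$ this already gives the upper-triangular generators. The lower-triangular ones require a Weyl element. Put $w=\begin{pmatrix}0&1\\-1&0\end{pmatrix}\in G_0$; then $w\,e_{12}(s)\,w^{-1}=e_{21}(-s)$. So $e_{21}(\pi^{-1}b)=w\,e_{12}(-\pi^{-1}b)\,w^{-1}\in H$. This settles $k\leq 1$.

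For general $k$, I would use the element $t=d^{-1}w\,d\,w^{-1}$ (or some similar product), which lies in $H$. A direct computation should identify $t$, up to sign, with $\operatorname{diag}(\pi^{-1},\pi)$. If so, $H$ contains $h=\operatorname{diag}(\pi^{-1},\pi)$, and conjugation gives
\[
h^{-1}e_{12}(s)h=e_{12}(\pi^{2}s),\qquad h\,e_{12}(s)\,h^{-1}=e_{12}(\pi^{-2}s).
\]
Conjugating the $k\leq1$ cases repeatedly by $h^{\pm1}$ then yields $e_{12}(\pi^{-k}b)$ for every $k$: odd exponents come from $k=1$ and even exponents from $k=0$. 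The same argument applied through $w$ handles $e_{21}$.

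The only step that needs care is verifying that the diagonal element really lies in $H$. Concretely, I would compute $d^{-1}wd$, which equals $\begin{pmatrix}0&\pi^{-1}\\-\pi&0\end{pmatrix}$, and multiply it by $w^{-1}$. Note that $d$ itself has determinant $\pi$, so it is not in $\SL_2$; the product $d^{-1}wd$, however, has determinant $1$ and lies in $G_1$ by definition. The product $(d^{-1}wd)\,w^{-1}$ then gives $\operatorname{diag}(\pi^{-1},\pi)$, so it lies in $H$ without needing any rational-coefficient elements. I expect this bookkeeping, rather than any conceptual issue, to be the main point to check; the rest is immediate from the generators.
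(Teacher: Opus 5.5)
Your proof is correct and is essentially the paper's argument. The paper also obtains $\operatorname{diag}(\pi^{-1},\pi)$ as the product of $\begin{pmatrix}0&\pi^{-1}\\-\pi&0\end{pmatrix}=d^{-1}wd$ and $w^{-1}$, though it writes both as products of elementary matrices, and then conjugates $e_{12}(f)$ and $e_{21}(f)$ by powers of this diagonal matrix; the only cosmetic difference is that the paper writes every $r\in R$ as $\pi^{-2n}f$ with $f\in A[X^{\pm1},Y^{\pm1}]$, so it needs no separate $k=1$ base case.
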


Let $v\colon F^\times\to\mathbb{Z}$ be the normalized valuation of $A$.
Extend $v$ to $F(X,Y)$ by the Gauss valuation and set $v(0)=\infty$.
For every nonzero Laurent polynomial,
\[
 v\left(\sum_{(i,j)\in\mathbb{Z}^2}c_{ij}X^iY^j\right)
 =\min_{i,j}v(c_{ij}).
\]
Its \emph{valuation ring} is
\[
 \mathcal{O}=\{f\in F(X,Y):v(f)\geq0\}.
\]
It is a discrete valuation ring with uniformizer $\pi$.

The next proposition follows from the argument of \cite[Lemma~2.6]{Abramenko}.
We repeat it below for completeness.

\begin{proposition}\label{prop:amalgam-criterion}
Suppose that
\[
 a\in \SL_2(\mathcal{O}),
 \qquad
 b\in \SL_2(\mathcal{O})^d,
\]
and the following conditions are satisfied:
\begin{enumerate}
\item $a\notin \SL_2(R)\cdot \SL_2(\mathcal{O})^d$, where
$\cdot$ denotes the setwise product of subsets of $\SL_2(F(X,Y))$;
\item $b\notin \SL_2(\mathcal{O})$.
\end{enumerate}
Then
\[
 aba^{-1}\notin
 \left\langle
 \SL_2(A[X^{\pm1},Y^{\pm1}]),
 \SL_2(A[X^{\pm1},Y^{\pm1}])^d
 \right\rangle.
\]
\end{proposition}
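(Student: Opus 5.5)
We will use Serre's amalgam decomposition for the valuation $v$ on $K=F(X,Y)$: since $\mathcal{O}$ is a discrete valuation ring with uniformizer $\pi$, the group $\SL_2(K)$ acts on the Bruhat--Tits tree $T$, and
\[
 \SL_2(K)\supseteq \SL_2(\mathcal{O})\ast_{B}\SL_2(\mathcal{O})^d,\qquad B=\SL_2(\mathcal{O})\cap\SL_2(\mathcal{O})^d .
\]
More precisely, the subgroup generated by $G_0=\SL_2(\mathcal{O})$ and $G_1=\SL_2(\mathcal{O})^d$ is their amalgamated product over $B$. Here $G_0$ and $G_1$ are the stabilisers of the adjacent vertices $x_0=[\mathcal{O}^2]$ and $x_1=d^{-1}x_0$, and $B$ is the stabiliser of the edge between them.

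Put $H_0=\SL_2(A[X^{\pm1},Y^{\pm1}])\subseteq G_0$ and $H_1=H_0^d\subseteq G_1$. The key observation is that these subgroups are \emph{saturated} relative to the amalgam data, in the sense that
\[
 H_0\cap B=H_1\cap B=:C .
\]
Both sides of this equality are the elements of $\SL_2(A[X^{\pm1},Y^{\pm1}])$ whose lower-left entry lies in $\pi A[X^{\pm1},Y^{\pm1}]$; this uses $\mathcal{O}\cap R=A[X^{\pm1},Y^{\pm1}]$ and $\pi\mathcal{O}\cap R=\pi A[X^{\pm1},Y^{\pm1}]$. By the normal form theorem, $H=\langle H_0,H_1\rangle$ is then the amalgam $H_0\ast_C H_1$, embedded compatibly, with $H_i\cap B=C$. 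Consequently every element $h\in H$ has a reduced expression $h=h_1h_2\cdots h_k$ with letters alternating in $H_0\setminus C$ and $H_1\setminus C$. Such an expression is also reduced in $G_0\ast_B G_1$, because $h_i\notin B$ follows from $h_i\notin C$ and $H_i\cap B=C$.

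Now suppose $g=aba^{-1}\in H$. We have $b\in G_1\setminus B$ by hypothesis~(2), since $b\notin G_0$, and $a\in G_0$. The element $aba^{-1}$ fixes the vertex $ax_1$ and moves $ax_0=x_0$. Geometrically, it fixes the neighbour $ax_1$ of $x_0$ but not $x_0$ itself. We will distinguish two cases.

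\textbf{Case $a\in B$.} Then $g\in G_1\setminus B$, so $g$ has length $1$ and starts in $G_1$. Comparing with the reduced $H$-word of $g$ via uniqueness of normal forms, $g\in H_1C$-cosets, i.e.\ $g\in H_1$. Hence $a\in H_0 H_1\cdots$. More carefully, the reduced word $a\cdot b\cdot a^{-1}$ has length $3$ with $a\in G_0\setminus B$, unless $a$ lies in $B$. So whenever $a\in B$ we have $a\in \SL_2(R)\cdot G_1$ trivially, since $a=1\cdot a$ with $a\in B\subseteq G_1$. This contradicts hypothesis~(1), so this case cannot occur.

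\textbf{Case $a\in G_0\setminus B$.} Then $a\,b\,a^{-1}$ is a reduced word of length $3$ beginning in $G_0$. Write the reduced $H$-word $g=h_1\cdots h_k$. Uniqueness of the reduced form up to $B$-insertions gives $k=3$ and $h_1\in G_0\setminus B$, and moreover $h_1B=aB$, i.e.\ $a\in h_1B\subseteq \SL_2(R)\cdot G_1$. This again contradicts hypothesis~(1).

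The main obstacle is the saturation claim $H_0\cap B=H_1\cap B$, together with the compatibility of normal forms between the two amalgams: one must check that a reduced word in $H$ stays reduced in $G_0\ast_B G_1$ and that its first letter determines the coset $aB$. I would verify the intersection by an explicit computation with entries, and the rest by the tree action. The geometric version runs as follows. The path from $x_0$ to $g x_0$ in $T$ has its first edge at $ax_1$. The $H$-orbit structure shows that this first edge lies in the $H_0$-orbit of the edge $[x_0,x_1]$. That gives $a\in H_0B\subseteq\SL_2(R)\cdot\SL_2(\mathcal{O})^d$.
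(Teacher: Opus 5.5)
Your proof is correct and is essentially the paper's argument (Lemma~\ref{lem:amalgam}): you exclude $a\in B$ because $B\subseteq\SL_2(\mathcal{O})^d$, reduce a word in the two subgroups to a reduced word of length $3$ in $\SL_2(\mathcal{O})\ast_B\SL_2(\mathcal{O})^d$, and compare first syllables to get $a\in h_1B\subseteq\SL_2(R)\cdot\SL_2(\mathcal{O})^d$. The saturation $H_0\cap B=H_1\cap B$ that you single out as the main obstacle is automatic once you note, as in Lemma~\ref{lem:group-intersections}, that $H_0=\SL_2(R)\cap G_0$ and $H_1=\SL_2(R)\cap G_1$, so both intersections equal $\SL_2(R)\cap B$; you can also delete the muddled false start at the beginning of your Case $a\in B$ paragraph.
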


We aim to prove Theorem~\ref{thm:main} assuming these propositions.  Their proofs
are given in Subsections~\ref{subsec:elementary-bound-proof} and
\ref{subsec:amalgam-criterion-proof}.

We first give a schematic outline of the proof.
\begin{enumerate}
\item Lemma~\ref{lem:coset} gives the exclusion required in
Proposition~\ref{prop:amalgam-criterion} for the first factor
$a=e_{21}((X-1)/(Y-1))$ in \eqref{eq:factorization}.
\item For the first two factors $a$ and $b$ in \eqref{eq:factorization},
Lemma~\ref{lem:coset} and the valuation of the upper-right entry of $b$ verify
the two conditions of Proposition~\ref{prop:amalgam-criterion}.  The identity
$aba^{-1}=M_\pi$ then shows that $M_\pi$ does not belong to the
subgroup generated by $\SL_2(A[X^{\pm1},Y^{\pm1}])$ and
$\SL_2(A[X^{\pm1},Y^{\pm1}])^d$.
\item Proposition~\ref{prop:elementary-bound} places $\E_2(R)$ in this subgroup.
Thus $M_\pi\notin \E_2(R)$.
\end{enumerate}

Put
\[
 q=\frac{X-1}{Y-1}\in F(X,Y).
\]

\begin{lemma}\label{lem:coset}
One has
\[
 e_{21}(q)\notin \SL_2(R)\cdot \SL_2(\mathcal{O})^d.
\]
\end{lemma}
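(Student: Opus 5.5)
We argue by contradiction: suppose $e_{21}(q)=g h$ with $g\in\SL_2(R)$ and $h\in\SL_2(\mathcal{O})^d$. Write
\[
g^{-1}=\begin{pmatrix}\alpha&\beta\\ \gamma&\delta\end{pmatrix}\in\SL_2(R).
\]
Since $d=\operatorname{diag}(\pi,1)$, a matrix $\left(\begin{smallmatrix}a&b\\ c&e\end{smallmatrix}\right)$ lies in $\SL_2(\mathcal{O})^d$ exactly when $v(a),v(e)\ge0$, $v(b)\ge-1$ and $v(c)\ge1$. We compute
\[
g^{-1}e_{21}(q)=\begin{pmatrix}\alpha+\beta q&\beta\\ \gamma+\delta q&\delta\end{pmatrix}.
\]
Since $Y-1$ has Gauss valuation $0$, it is a unit of $\mathcal{O}$. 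Multiplying the first column by $Y-1$, we therefore get:
\begin{enumerate}
\item $s:=(Y-1)\alpha+(X-1)\beta$ satisfies $v(s)\ge0$;
\item $v(\pi\beta)\ge0$ and $v(\delta)\ge0$;
\item $t:=(Y-1)\gamma+(X-1)\delta$ satisfies $v(t)\ge1$.
\end{enumerate}
All of these are Laurent polynomials over $F$. We use throughout that $R\cap\mathcal{O}=A[X^{\pm1},Y^{\pm1}]$, and that reduction modulo $\pi$ maps this ring onto $k[X^{\pm1},Y^{\pm1}]$, where $k=A/\pi A$; the latter is a UFD.

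The first step shows that $\gamma$ is integral. Suppose instead that $v(\gamma)=-m<0$. Then $\pi^m t\equiv (Y-1)\overline{\pi^m\gamma}\not\equiv0 \pmod\pi$, because $\pi^m\delta\equiv0$. This contradicts $v(\pi^m t)\ge m+1$. Hence $\gamma$ is integral, and reducing condition (3) gives $(Y-1)\bar\gamma+(X-1)\bar\delta=0$. Since $X-1$ and $Y-1$ are coprime in $k[X^{\pm1},Y^{\pm1}]$, there is an $h\in A[X^{\pm1},Y^{\pm1}]$ with
\[
\delta=(Y-1)h+\pi\delta',\qquad \gamma=-(X-1)h+\pi\gamma',
\]
where $\delta',\gamma'$ are integral. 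In the same way, conditions (1) and (2) show that $\pi(Y-1)\alpha$, and hence $\pi\alpha$, is integral.

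The second step uses the determinant. Substituting into $\alpha\delta-\beta\gamma=1$ gives
\[
1=h\,s+(\pi\alpha)\delta'-(\pi\beta)\gamma'.
\]
Every term on the right is integral, so reducing modulo $\pi$ yields $\bar h\,\bar s=1$ in $k[X^{\pm1},Y^{\pm1}]$. Consequently $\bar s=cX^iY^j$ for some $c\in k^\times$ and some $i,j$.

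The final step is an evaluation at $X=Y=1$. Since $s=(Y-1)\alpha+(X-1)\beta$ with $\alpha,\beta\in R$, we have $s(1,1)=0$ in $F$. On the other hand, $s$ lies in $A[X^{\pm1},Y^{\pm1}]$, and evaluation at $(1,1)$ commutes with reduction modulo $\pi$. Hence $\overline{s(1,1)}=\bar s(1,1)=c\neq0$, a contradiction.

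The main obstacle I expect is the bookkeeping of integrality. The entries $\alpha$ and $\beta$ are not themselves integral; only $\pi\alpha$, $\pi\beta$ and $s$ are. So one has to check carefully that each term in the expansion of the determinant lands in $A[X^{\pm1},Y^{\pm1}]$ before reducing. Once that is in place, the conclusion comes from the fact that the ideal $(X-1,Y-1)$ is non-principal, exploited through evaluation at $(1,1)$.
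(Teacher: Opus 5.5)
Your setup is correct: conditions (1)--(3) are exactly the paper's entry inclusions, with your $g^{-1}$ playing the role of the inverse of the paper's $\SL_2(R)$-factor. The decomposition of $(\gamma,\delta)$, the determinant identity $1=hs+(\pi\alpha)\delta'-(\pi\beta)\gamma'$, and $\bar s(1,1)=0$ are also correct. The gap is the sentence ``Every term on the right is integral, so reducing modulo $\pi$ yields $\bar h\,\bar s=1$.'' Integrality of $\pi\alpha$ and $\pi\beta$ only means these terms have a well-defined reduction; it does not make that reduction zero. What you actually get is
\[
1=\bar h\,\bar s+\overline{\pi\alpha}\,\bar\delta'-\overline{\pi\beta}\,\bar\gamma' \quad\text{in } k[X^{\pm1},Y^{\pm1}],
\]
and nothing you have shown forces $\overline{\pi\alpha}=\overline{\pi\beta}=0$. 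For instance, conditions (1) and (2) hold for $\alpha=-\pi^{-1}(X-1)$, $\beta=\pi^{-1}(Y-1)$, both of valuation $-1$. You flag exactly this danger in your last paragraph but never resolve it. So the conclusion that $\bar s$ is a unit is unsupported, and your final contradiction with it.

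The missing step is to treat $(\pi\alpha,\pi\beta)$ the way you treated $(\gamma,\delta)$. Condition (1) gives $v(\pi s)\ge 1$, so reducing $\pi s=(Y-1)(\pi\alpha)+(X-1)(\pi\beta)$ gives $(Y-1)\overline{\pi\alpha}+(X-1)\overline{\pi\beta}=0$. By coprimality, $\overline{\pi\alpha}\in(X-1)$ and $\overline{\pi\beta}\in(Y-1)$. Every term on the right of the displayed identity then vanishes at $X=Y=1$ (using $\bar s(1,1)=0$, which you proved), so evaluating gives $1=0$ in $k$; you never need $\bar s$ to be a unit. With this repair your argument coincides with the paper's. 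In your variables the paper's matrix $C$ is $\left(\begin{smallmatrix}\delta&-\pi\beta\\-\gamma&\pi\alpha\end{smallmatrix}\right)$. The paper shows all four entries lie in $\mathfrak m=(\pi,X-1,Y-1)$, so $\det C\in\mathfrak m^2$, and evaluation at $(1,1)$ shows this is incompatible with $\det C=\pi$. Your first step can also be shortened: $\gamma=(\gamma+\delta q)-\delta q$ is integral at once because $v(q)=0$.
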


\begin{proof}
Suppose otherwise.  Write $e_{21}(q)=hg$, where
$h=(h_{ij})\in \SL_2(R)$ and $g\in \SL_2(\mathcal{O})^d$.  Then
\begin{equation}\label{eq:h-membership}
 e_{21}(-q)h=g^{-1}\in \SL_2(\mathcal{O})^d.
\end{equation}

We claim that there is a matrix
\[
 C=(c_{ij})\in M_2(A[X^{\pm1},Y^{\pm1}])
\]
such that $\det C=\pi$ and
\begin{align*}
 (Y-1)c_{21}&\equiv(X-1)c_{11}
 \qquad\bmod \pi A[X^{\pm1},Y^{\pm1}],\\
 (Y-1)c_{22}&\equiv(X-1)c_{12}
 \qquad\bmod \pi A[X^{\pm1},Y^{\pm1}].
\end{align*}
Direct conjugation gives
\[
 \SL_2(\mathcal{O})^d=
 \left\{
 \begin{pmatrix}
 \mathcal{O} & \pi^{-1}\mathcal{O}\\
 \pi\mathcal{O} & \mathcal{O}
 \end{pmatrix}
 \right\}\cap \SL_2(F(X,Y)).
\]
Combining this description with \eqref{eq:h-membership} gives
\begin{equation}\label{eq:h-entry-inclusions}
\begin{array}{l@{\qquad\qquad}l}
 h_{11}\in\mathcal{O},
 & h_{12}\in\pi^{-1}\mathcal{O},\\
 h_{21}-qh_{11}\in\pi\mathcal{O},
 & h_{22}-qh_{12}\in\mathcal{O}.
\end{array}
\end{equation}
Set
\[
 C=h\operatorname{diag}(1,\pi)
 =\begin{pmatrix}
 h_{11}&\pi h_{12}\\
 h_{21}&\pi h_{22}
 \end{pmatrix}
 =(c_{ij}).
\]
Since $v(q)=0$, these inclusions show that every entry of $C$
lies in $\mathcal{O}$; indeed,
$c_{21}=(h_{21}-qh_{11})+qh_{11}$ and
$c_{22}=\pi(h_{22}-qh_{12})+q(\pi h_{12})$.
Every entry also lies in $R$.  Since
$\mathcal{O}\cap R=A[X^{\pm1},Y^{\pm1}]$ by the explicit computation of the
Gauss valuation in Lemma~\ref{lem:intersections},
\[
 C\in M_2(A[X^{\pm1},Y^{\pm1}]),
 \qquad
 \det C=\pi\det h=\pi.
\]
Since $v(Y-1)=0$, multiplication by $Y-1$ preserves $\pi\mathcal{O}$.
Thus multiplying the lower-row inclusions in \eqref{eq:h-entry-inclusions} by
$Y-1$ and $\pi(Y-1)$, respectively, gives the claimed congruences, since
\[
 \pi\mathcal{O}\cap A[X^{\pm1},Y^{\pm1}]
 =\pi A[X^{\pm1},Y^{\pm1}].
\]
This proves the claim.

It remains to show that the matrix $C$ from the claim cannot exist.  Reduce
its congruences in
\[
 (A/\pi A)[X^{\pm1},Y^{\pm1}].
\]
In this unique factorization domain, $Y-1$ is prime and does not divide $X-1$;
hence $Y-1$ divides $c_{11}$ and $c_{12}$.  Similarly, $X-1$ is prime and does
not divide $Y-1$, so $X-1$ divides $c_{21}$ and $c_{22}$.  Thus
\[
 c_{11},c_{12}\in(\pi,Y-1),
 \qquad
 c_{21},c_{22}\in(\pi,X-1).
\]
Every entry of $C$ therefore belongs to the maximal ideal
\[
 \mathfrak{m}=(\pi,X-1,Y-1)
 \subseteq A[X^{\pm1},Y^{\pm1}],
\]
so $\det C\in\mathfrak{m}^2$.
The evaluation at $X=Y=1$ maps $\mathfrak{m}^2$ into $\pi^2A$.
This contradicts $\det C=\pi$.
\end{proof}

\begin{proof}[Proof of Theorem~\ref{thm:main}]
Put
\[
 a=e_{21}(q),
 \qquad
 b=e_{12}\left(\tfrac{1}{\pi}(Y-1)^2\right),
\]
the first two factors in \eqref{eq:factorization}.  Since
$v(X-1)=v(Y-1)=0$, we have $v(q)=0$, and hence
$a\in \SL_2(\mathcal{O})$.  Moreover,
\[
 b=d^{-1}e_{12}((Y-1)^2)d\in \SL_2(\mathcal{O})^d.
\]
Its upper-right entry has valuation $-1$, so $b\notin \SL_2(\mathcal{O})$;
this is condition~(2) of Proposition~\ref{prop:amalgam-criterion}.
Condition~(1) is Lemma~\ref{lem:coset}.  Since
$aba^{-1}=M_\pi$ by \eqref{eq:factorization},
Proposition~\ref{prop:amalgam-criterion} gives
\[
 M_\pi\notin
 \left\langle
 \SL_2(A[X^{\pm1},Y^{\pm1}]),
 \SL_2(A[X^{\pm1},Y^{\pm1}])^d
 \right\rangle.
\]
This subgroup contains $\E_2(R)$ by
Proposition~\ref{prop:elementary-bound}.  Hence $M_\pi\notin \E_2(R)$.
\end{proof}

\subsection{Proof of Proposition~\ref{prop:elementary-bound}}\label{subsec:elementary-bound-proof}

The proposition is proved by direct matrix calculations.

\begin{proof}[Proof of Proposition~\ref{prop:elementary-bound}]
Let
\[
 E=\left\langle \E_2(A[X^{\pm1},Y^{\pm1}]),
 e_{12}(\pi^{-1})\right\rangle.
\]
The identity
\[
 \bigl(e_{12}(\pi^{-1})e_{21}(-\pi)e_{12}(\pi^{-1})\bigr)
 \bigl(e_{12}(-1)e_{21}(1)e_{12}(-1)\bigr)
 =\operatorname{diag}(\pi^{-1},\pi)
\]
shows that $\operatorname{diag}(\pi^{-1},\pi)\in E$.
For $n\geq 0$ and $f\in A[X^{\pm1},Y^{\pm1}]$,
\begin{align*}
 \operatorname{diag}(\pi^{-1},\pi)^n e_{12}(f)
 \operatorname{diag}(\pi^{-1},\pi)^{-n}&=e_{12}(\pi^{-2n}f),\\
 \operatorname{diag}(\pi^{-1},\pi)^{-n} e_{21}(f)
 \operatorname{diag}(\pi^{-1},\pi)^n&=e_{21}(\pi^{-2n}f).
\end{align*}
Every element of $R$ can be written as $\pi^{-2n}f$ for suitable $n\geq0$ and
$f\in A[X^{\pm1},Y^{\pm1}]$.
Thus $E=\E_2(R)$.
Now
$\E_2(A[X^{\pm1},Y^{\pm1}])\subseteq \SL_2(A[X^{\pm1},Y^{\pm1}])$ and
\[
 e_{12}(\pi^{-1})=d^{-1}e_{12}(1)d
 \in \SL_2(A[X^{\pm1},Y^{\pm1}])^d.
\]
The required inclusion follows.
\end{proof}

\subsection{Proof of Proposition~\ref{prop:amalgam-criterion}}\label{subsec:amalgam-criterion-proof}

The argument uses Serre's standard amalgam decomposition
\cite[Chapter~II, \S1.4, Theorem~6]{Serre}:
\[
 \SL_2(F(X,Y))=
 \SL_2(\mathcal{O})
 *_{\SL_2(\mathcal{O})\cap \SL_2(\mathcal{O})^d}
 \SL_2(\mathcal{O})^d.
\]
(Completeness of $F(X,Y)$ with respect to $v$ is not required; Serre's theorem
applies to an arbitrary discretely valued field.)

\begin{lemma}\label{lem:intersections}
The following equalities hold:
\[
 \mathcal{O}\cap R=A[X^{\pm1},Y^{\pm1}],
 \qquad
 \pi\mathcal{O}\cap A[X^{\pm1},Y^{\pm1}]
 =\pi A[X^{\pm1},Y^{\pm1}].
\]
\end{lemma}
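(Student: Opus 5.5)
The plan is to reduce both equalities to the explicit formula for the Gauss valuation on Laurent polynomials recorded just before Proposition~\ref{prop:amalgam-criterion}: for a nonzero $f=\sum c_{ij}X^iY^j\in R$, one has $v(f)=\min_{i,j}v(c_{ij})$.

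For the first equality, the inclusion $A[X^{\pm1},Y^{\pm1}]\subseteq\mathcal{O}\cap R$ is immediate. Every coefficient of such an element lies in $A$, so each $v(c_{ij})\geq0$, and the minimum is $\geq0$. For the reverse inclusion, I will take $f\in R$ with $v(f)\geq0$. The formula gives $\min_{i,j}v(c_{ij})\geq0$, so every coefficient satisfies $v(c_{ij})\geq0$. Since $A=\{c\in F: v(c)\geq0\}$ is a discrete valuation ring, each $c_{ij}\in A$, and hence $f\in A[X^{\pm1},Y^{\pm1}]$. The case $f=0$ is trivial.

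For the second equality, the inclusion $\pi A[X^{\pm1},Y^{\pm1}]\subseteq\pi\mathcal{O}\cap A[X^{\pm1},Y^{\pm1}]$ follows from the first equality, since $A[X^{\pm1},Y^{\pm1}]\subseteq\mathcal{O}$. Conversely, I will take $f\in A[X^{\pm1},Y^{\pm1}]$ with $f\in\pi\mathcal{O}$, which means $v(f)\geq1$. The formula then gives $v(c_{ij})\geq1$ for every $i,j$, so $c_{ij}=\pi c'_{ij}$ with $c'_{ij}\in A$. Therefore $f=\pi f'$ with $f'=\sum c'_{ij}X^iY^j\in A[X^{\pm1},Y^{\pm1}]$.

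The only real point is the formula $v(f)=\min v(c_{ij})$ itself, which the paper takes as the definition of the Gauss extension on Laurent polynomials. If one wants it justified, it reduces to Gauss's lemma: the reduction $(A/\pi A)[X^{\pm1},Y^{\pm1}]$ is a domain, so $\min v$ is multiplicative, and hence it extends to a valuation on $F(X,Y)$. I therefore expect no genuine obstacle beyond citing this formula.
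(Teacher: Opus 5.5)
Your proposal is correct and follows the paper's proof: both equalities come from the explicit formula $v\bigl(\sum c_{ij}X^iY^j\bigr)=\min_{i,j}v(c_{ij})$, since a Laurent polynomial has valuation $\geq0$ (resp.\ $\geq1$) exactly when all its coefficients lie in $A$ (resp.\ $\pi A$). Your remark that the formula is justified by Gauss's lemma, via $(A/\pi A)[X^{\pm1},Y^{\pm1}]$ being a domain, is a correct supplement that the paper leaves implicit.
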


\begin{proof}
Every element of $R$ has a unique finite Laurent expansion.
Its Gauss valuation is nonnegative exactly when all coefficients lie in $A$.
This proves the first equality.
An element of $A[X^{\pm1},Y^{\pm1}]$ has valuation at least $1$ exactly when
all coefficients lie in $\pi A$, which proves the second.
\end{proof}

\begin{lemma}\label{lem:group-intersections}
One has
\begin{align*}
 \SL_2(R)\cap \SL_2(\mathcal{O})
 &=\SL_2(A[X^{\pm1},Y^{\pm1}]),\\
 \SL_2(R)\cap \SL_2(\mathcal{O})^d
 &=\SL_2(A[X^{\pm1},Y^{\pm1}])^d.
\end{align*}
\end{lemma}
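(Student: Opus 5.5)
The plan is to reduce both equalities entrywise to Lemma~\ref{lem:intersections}, using the explicit entrywise descriptions of $\SL_2(\mathcal{O})$ and $\SL_2(\mathcal{O})^d$ as subsets of $\SL_2(F(X,Y))$.

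For the first equality, note that a matrix of determinant $1$ lies in $\SL_2(R)\cap\SL_2(\mathcal{O})$ exactly when each of its entries lies in both $R$ and $\mathcal{O}$. By Lemma~\ref{lem:intersections}, $\mathcal{O}\cap R=A[X^{\pm1},Y^{\pm1}]$, so this holds exactly when every entry lies in $A[X^{\pm1},Y^{\pm1}]$. Since the determinant is already $1$, the intersection is precisely $\SL_2(A[X^{\pm1},Y^{\pm1}])$, and both inclusions are immediate.

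For the second equality, I would use the description from the proof of Lemma~\ref{lem:coset}. A matrix $g\in\SL_2(F(X,Y))$ lies in $\SL_2(\mathcal{O})^d$ iff $g_{11},g_{22}\in\mathcal{O}$, $g_{12}\in\pi^{-1}\mathcal{O}$ and $g_{21}\in\pi\mathcal{O}$. The same conjugation computation, with $d^{-1}\begin{pmatrix}\alpha&\beta\\ \gamma&\delta\end{pmatrix}d=\begin{pmatrix}\alpha&\pi^{-1}\beta\\ \pi\gamma&\delta\end{pmatrix}$, shows that $\SL_2(A[X^{\pm1},Y^{\pm1}])^d$ consists of determinant-one matrices with diagonal entries in $A[X^{\pm1},Y^{\pm1}]$, upper-right entry in $\pi^{-1}A[X^{\pm1},Y^{\pm1}]$ and lower-left entry in $\pi A[X^{\pm1},Y^{\pm1}]$.

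The inclusion $\supseteq$ follows because $\pi^{\pm1}\in F\subseteq R$, so these entries lie in $R$, and $A[X^{\pm1},Y^{\pm1}]\subseteq\mathcal{O}$. For $\subseteq$, take $g$ with entries in $R$ satisfying the $\mathcal{O}$-conditions and treat the entries one at a time.
\begin{itemize}
\item The diagonal entries lie in $\mathcal{O}\cap R=A[X^{\pm1},Y^{\pm1}]$.
\item For the upper-right entry, $\pi g_{12}\in\mathcal{O}\cap R=A[X^{\pm1},Y^{\pm1}]$, so $g_{12}\in\pi^{-1}A[X^{\pm1},Y^{\pm1}]$.
\item For the lower-left entry, $g_{21}\in\pi\mathcal{O}\cap R$. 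This set lies inside $\mathcal{O}\cap R=A[X^{\pm1},Y^{\pm1}]$, hence equals $\pi\mathcal{O}\cap A[X^{\pm1},Y^{\pm1}]=\pi A[X^{\pm1},Y^{\pm1}]$ by the second part of Lemma~\ref{lem:intersections}.
\end{itemize}
Thus $g\in\SL_2(A[X^{\pm1},Y^{\pm1}])^d$.

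There is no real obstacle here. The only point needing care is the lower-left entry, where one must pass through $\pi\mathcal{O}\cap R\subseteq\mathcal{O}\cap R$ before applying the second equality of Lemma~\ref{lem:intersections}.
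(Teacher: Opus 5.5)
Your proof is correct. The first equality is handled exactly as in the paper. For the second equality you take a different, longer route: you describe $\SL_2(\mathcal{O})^d$ and $\SL_2(A[X^{\pm1},Y^{\pm1}])^d$ entry by entry and check each entry against Lemma~\ref{lem:intersections}.

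The paper instead conjugates the first equality by $d$. Since $\pi\in F^\times\subseteq R^\times$, one has $d\in\GL_2(R)$, so $\SL_2(R)^d=\SL_2(R)$. Conjugation is a bijection and therefore commutes with intersections. This gives $\SL_2(R)\cap\SL_2(\mathcal{O})^d=(\SL_2(R)\cap\SL_2(\mathcal{O}))^d=\SL_2(A[X^{\pm1},Y^{\pm1}])^d$ with no further entry checks.

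Both arguments rest on the same fact, $\pi^{\pm1}\in R$, which you use in your $\supseteq$ direction. Your version makes the entrywise shape of $\SL_2(A[X^{\pm1},Y^{\pm1}])^d$ explicit, but it redoes by hand what the conjugation argument gets for free.

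The ``point needing care'' at the lower-left entry is also unnecessary. Since $g_{21}\in\pi\mathcal{O}$ if and only if $\pi^{-1}g_{21}\in\mathcal{O}$, and $\pi^{-1}g_{21}\in R$, you can treat this entry exactly like the upper-right one using only $\mathcal{O}\cap R=A[X^{\pm1},Y^{\pm1}]$. The second equality of Lemma~\ref{lem:intersections} is then not needed.
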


\begin{proof}
The first equality follows entrywise from Lemma~\ref{lem:intersections}.
Conjugating it by $d$ gives the second, since $\pi\in F^\times\subseteq R^\times$,
so $d\in \GL_2(R)$ and $\SL_2(R)^d=\SL_2(R)$.
\end{proof}

\begin{lemma}[{cf.\ \cite[Lemma~2.6]{Abramenko}}]\label{lem:amalgam}
Let $G=G_0*_{U}G_1$ and $H\leq G$.
If
\[
 a\in G_0\setminus (H\cdot G_1),
 \qquad
 b\in G_1\setminus U,
\]
then
\[
 aba^{-1}\notin
 \langle H\cap G_0,H\cap G_1\rangle.
\]
\end{lemma}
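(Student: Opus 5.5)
We argue by contradiction, using the normal form theorem for amalgamated products (equivalently, the action of $G$ on its Bass--Serre tree). Put $H_0=H\cap G_0$ and $H_1=H\cap G_1$, and suppose $aba^{-1}\in\langle H_0,H_1\rangle$. The idea is to compare two reduced words for the same element. On one hand, $aba^{-1}$ has the reduced form $a\cdot b\cdot a^{-1}$ of length $3$, provided $a\notin U$. This holds because $U\subseteq G_1\subseteq H\cdot G_1$, and $a\notin H\cdot G_1$ by hypothesis. Since also $b\in G_1\setminus U$, the word alternates between $G_0\setminus U$ and $G_1\setminus U$. On the other hand, any element of $\langle H_0,H_1\rangle$ can be written as an alternating product $w=x_1x_2\cdots x_k$, where each $x_i$ lies in $H_0$ or in $H_1$ and consecutive letters lie in different factors.

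The key step is to reduce such a word. Whenever some $x_i$ lies in $U$, we absorb it into a neighbour; the neighbour still lies in $H$, since $x_i\in H\cap U$. This shortens the word while keeping every letter in $H_0$ or $H_1$. Repeating this, we either reach a reduced alternating word in $H_0\setminus U$ and $H_1\setminus U$, or the element lies in $H\cap U$. The latter is impossible, because $aba^{-1}$ has length $3$ and not $0$. So $aba^{-1}=y_1\cdots y_m$ with all $y_j\in H$ and the word reduced. By uniqueness of length and of the type sequence of reduced forms, $m=3$ and the word has type $(G_0,G_1,G_0)$. In particular $y_1\in H_0$.

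Uniqueness of reduced forms also says that the first letters agree up to right multiplication by $U$. Concretely, $y_1=au$ for some $u\in U$. I would justify this via the standard normal form: coset representatives for $G_0/U$ and $G_1/U$ are determined uniquely, starting from the left. It then follows that $a=y_1u^{-1}\in H\cdot U\subseteq H\cdot G_1$, contradicting the hypothesis.

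The main obstacle is stating precisely the form of uniqueness that gives $y_1\in aU$. The standard theorem is usually phrased with fixed transversals: each reduced word $g_1\cdots g_n$ has a unique normal form $s_1\cdots s_n u$. In that normal form, $s_1$ is the transversal representative of $g_1U$, and this representative depends only on the element and not on the chosen reduced word. I would verify this by induction on length, using the fact that a reduced word of length $\geq1$ cannot represent an element of $U$. Alternatively, one can argue on the Bass--Serre tree. Here $aba^{-1}$ fixes the vertex $aG_1$ and moves the base vertex $G_0$ to distance $4$ along the path through $aG_1$. The $H$-word gives the same path through $y_1G_1$, so $aG_1=y_1G_1$ and hence $a\in HG_1$ directly. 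This second route is cleaner and avoids the transversal bookkeeping, so it is the one I would write out.
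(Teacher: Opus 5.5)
Your proposal is correct and follows essentially the same route as the paper: you reduce a word in $H\cap G_0$ and $H\cap G_1$ to a reduced word with letters in $H$, use uniqueness of length and type to get $aba^{-1}=y_1y_2y_3$ of type $(G_0,G_1,G_0)$, and compare first syllables to obtain $y_1\in aU$, whence $a\in H\cdot G_1$. In your Bass--Serre tree variant there is a small slip: $aba^{-1}$ moves the vertex $G_0$ only to distance $2$ (the geodesic is $G_0,\,aG_1,\,abG_0=aba^{-1}G_0$; distance $4$ is what you get from the vertex $G_1$), but uniqueness of geodesics still yields $aG_1=y_1G_1$, so that version also goes through.
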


\noindent(In \cite[Lemma~2.6]{Abramenko}, the additional hypothesis
$aba^{-1}\in H$ is imposed; it is not needed for the conclusion above.)

\begin{proof}
Since $U\subseteq G_1\subseteq H\cdot G_1$, one has $a\notin U$.
Thus $aba^{-1}$ is a reduced word of length $3$ with factors in
$G_0,G_1,G_0$.

Suppose that
$aba^{-1}\in\langle H\cap G_0,H\cap G_1\rangle$.
Reducing a word in these generators and applying the normal-form theorem gives
the reduced expression
\[
 aba^{-1}=z_1z_2z_3,
 \qquad
 z_1,z_3\in H\cap G_0,
 \quad z_2\in H\cap G_1.
\]
By the uniqueness part of the normal-form theorem, corresponding syllables in
two reduced expressions of the same element lie in the same right $U$-coset
in the relevant factor.  Comparing the first syllables gives $z_1U=aU$.
Thus $z_1=au$ for some $u\in U$.

Now $z_1\in H\cap G_0\subseteq H$, while $u^{-1}\in U\subseteq G_1$.
Consequently,
\[
 a=z_1u^{-1}\in H\cdot G_1,
\]
contrary to the hypothesis.
\end{proof}

\begin{proof}[Proof of Proposition~\ref{prop:amalgam-criterion}]
Set
\[
 \begin{gathered}
 G=\SL_2(F(X,Y)),\qquad H=\SL_2(R),\\
 G_0=\SL_2(\mathcal{O}),\qquad G_1=\SL_2(\mathcal{O})^d,\\
 U=G_0\cap G_1.
 \end{gathered}
\]
The Serre decomposition above is $G=G_0*_{U}G_1$.
The hypotheses give
\[
 a\in G_0\setminus (H\cdot G_1),
 \qquad
 b\in G_1\setminus G_0.
\]
Since $U\subseteq G_0$, one has $b\notin U$.
Lemma~\ref{lem:amalgam} yields
\[
 aba^{-1}\notin\langle H\cap G_0,H\cap G_1\rangle.
\]
Lemma~\ref{lem:group-intersections} identifies the latter subgroup with
\[
 \left\langle
 \SL_2(A[X^{\pm1},Y^{\pm1}]),
 \SL_2(A[X^{\pm1},Y^{\pm1}])^d
 \right\rangle.
\]
\end{proof}

\begingroup
\hbadness=2000

\endgroup

\end{document}